\author{Pablo Shmerkin and Hong Wang}
\address{Department of Mathematics \\
The University of British Columbia \\
1984 Mathematics Road\\
Vancouver, BC, V6T 1Z2\\
Canada}
\email{pshmerkin@math.ubc.ca}
\address{Department of Mathematics, UCLA, Los Angeles, CA 90095, USA}
\email{hongwang@math.ucla.edu}
\title[Furstenberg sets and Bourgain's projection theorem]{Dimensions of Furstenberg sets and an extension of Bourgain's projection theorem}
\date{\today}
\subjclass[2010]{28A80 (Primary) 28A75, 28A78 (Secondary)}
\keywords{Furstenberg sets, projections, incidences, Hausdorff dimension, sum-product, discretized sets}
\thanks{P.S. is supported by an NSERC Discovery Grant}
\thanks{H.W. is supported by NSF Grant DMS-2055544}
\newcommand{\R}{\mathbb{R}}
\newcommand{\N}{\mathbb{N}}
\newcommand{\e}{\epsilon}
\DeclareMathOperator{\hdim}{dim_H}
\theoremstyle{plain}
\newtheorem{thm}{Theorem}
\newtheorem*{"thm"}{"Theorem"}
\newtheorem{lemma}[thm]{Lemma}
\newtheorem{cor}[thm]{Corollary}
\newtheorem{proposition}[thm]{Proposition}
\theoremstyle{definition}
\newtheorem{definition}[thm]{Definition}
\theoremstyle{remark}
\numberwithin{equation}{section}
\numberwithin{thm}{section}
\newcommand{\nref}[1]{(\hyperref[#1]{#1})}
\DeclareMathSymbol{\intop}  {\mathop}{mathx}{"B3}
\begin{document}

\begin{abstract}
We show that the Hausdorff dimension of $(s,t)$-Furstenberg sets is at least $s+t/2+\epsilon$, where $\epsilon>0$ depends only on $s$ and $t$. This improves the previously best known bound for $2s<t\le 1+\epsilon(s,t)$, in particular providing the first improvement since 1999 to the dimension of classical $s$-Furstenberg sets for $s<1/2$. We deduce this from a corresponding discretized incidence bound under minimal non-concentration assumptions, that simultaneously extends Bourgain's discretized projection and sum-product theorems. The proofs are based on a recent discretized incidence bound of T.~Orponen and the first author and a certain duality between $(s,t)$ and $(t/2,s+t/2)$-Furstenberg sets.
\end{abstract}

\maketitle


\section{Introduction and main results}

\subsection{Dimension of Furstenberg sets}

Let $s\in (0,1]$. We say that a set $K\subset\R^2$ is an $s$-Furstenberg set if for almost all directions $\theta\in S^1$ there is a line $\ell_{\theta}$ in direction $\theta$ such that $\hdim(K\cap \ell_{\theta})\ge s$. Motivated by work of Furstenberg and by the Szemer\'{e}di-Trotter Theorem in incidence geometry, T.~Wolff \cite{Wolff99} posed the problem of estimating the smallest possible dimension $\gamma(s)$ of an $s$-Furstenberg set. Using elementary geometric arguments, Wolff showed that
\[
\gamma(s) \ge \max(2s, s+1/2).
\]
Note that both bounds coincide for $s=1/2$. J.~Bourgain \cite{Bourgain03}, building up on work of N.~Katz and T.~Tao \cite{KatzTao01}, proved that $\gamma(1/2)>1+\e$ for some small universal constant $\e>0$; this is much deeper. Much more recently, T.~Orponen and the first author \cite{OrponenShmerkin21} established a similar improvement for $s\in (1/2,1)$:
\[
\gamma(s) \ge 2s+\e(s),
\]
where $\e(s)>0$ for $s\in (1/2,1)$. For the case $s<1/2$, the first author \cite{Shmerkin22} recently obtained a similar improvement, with the significant caveat that it involves only the \emph{packing} dimension of $s$-Furstenberg sets (which can be larger than Hausdorff dimension). In this paper, as a corollary of our main result we obtain the corresponding Hausdorff dimension improvement:

\begin{thm} \label{thm:classical-furst}
For every $s\in (0,1)$ there is $\e(s)>0$ such that every $s$-Furstenberg set $K$ satisfies
\[
\hdim(K) \ge s+\frac{1}{2}+\e(s).
\]
\end{thm}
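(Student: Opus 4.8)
The plan is to derive this from a discretized incidence/projection result stated later in the paper (the "main result" alluded to in the introduction), combined with the $(s,t)$–Furstenberg set bound $\dim_H \ge s + t/2 + \epsilon$ mentioned in the abstract. First I would recall the standard reduction: a classical $s$-Furstenberg set $K$, where a line $\ell_\theta$ with $\dim_H(K\cap\ell_\theta)\ge s$ exists for almost every $\theta\in S^1$, is in particular an $(s,1)$-Furstenberg set in the two-parameter sense, since the set of directions $\theta$ for which the fibre has dimension $\ge s$ has full measure and hence Hausdorff dimension $1$. Thus I would invoke the main theorem with parameters $(s,t)=(s,1)$: for every $s\in(0,1)$ there is $\epsilon(s)>0$ with $\dim_H(K)\ge s+\tfrac12+\epsilon(s)$. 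The content of Theorem 1.1 is therefore entirely contained in the two-parameter statement, and the proof here should be a short paragraph making the reduction explicit.

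The one point that requires a little care is the passage from "a.e. direction has a rich fibre" to a genuine $(s,t)$-Furstenberg configuration with $t=1$: one must check that the relevant definition of $(s,t)$-Furstenberg set used in the main theorem is satisfied. If that definition asks for a set $L$ of lines with $\dim_H(L)\ge t$ (in the natural metric on the space of lines, or equivalently on the parameter space of $(\text{slope},\text{intercept})$ pairs) such that $\dim_H(K\cap\ell)\ge s$ for all $\ell\in L$, then I would note that the map $\theta\mapsto\ell_\theta$ is bi-Lipschitz on any compact arc avoiding the vertical direction, so the full-measure set of good directions yields a set of lines of Hausdorff dimension $1$. A minor subtlety is handling directions near vertical and the fact that $\theta\mapsto\ell_\theta$ need not be measurable a priori; this is dealt with by a routine restriction to a compact positive-measure subset of good directions on which $\theta\mapsto\ell_\theta$ can be taken Borel (e.g.\ via a measurable selection), which does not decrease the dimension of the direction set below $1$.

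I do not anticipate any genuine obstacle in this deduction — it is a formal consequence of the stronger two-parameter theorem, and the only work is bookkeeping about definitions. All the mathematical difficulty is pushed into the proof of the $(s,t)$ result, which rests on the Orponen–Shmerkin discretized incidence bound and the $(s,t)\leftrightarrow(t/2,s+t/2)$ duality described in the abstract; the present theorem is simply the $t=1$ specialization.
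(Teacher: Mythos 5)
Your proposal is correct and is exactly the paper's argument: the paper proves Theorem \ref{thm:classical-furst} by observing that a classical $s$-Furstenberg set is an $(s,1)$-Furstenberg set and invoking Theorem \ref{thm:Furstenberg} with $t=1$ (the paper states this as ``an immediate corollary''). Your extra care about the direction-to-line map only needs the easy direction --- the projection from lines to directions is Lipschitz, so a full-measure set of directions already forces $\hdim(\mathcal{L})\ge 1$ --- but this is consistent with what the paper leaves implicit.
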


Recently, there has been much interest in the following generalization of Furstenberg sets: we say that $K\subset\R^2$ is an \emph{$(s,t)$-Furstenberg set} if there is a family of lines $\mathcal{L}$ with $\hdim(\mathcal{L})\ge t$ such that
\[
\hdim(K\cap \ell) \ge s, \quad \ell\in\mathcal{L}.
\]
Since lines are a two-dimensional manifold, the Hausdorff dimension of $\mathcal{L}$ is well defined. Classical $s$-Furstenberg sets are, of course, $(s,1)$-Furstenberg sets. The central problem, initiated by U.~Molter and E.~Rela \cite{MolterRela12}, is to estimate $\gamma(s,t)$,  the smallest possible Hausdorff dimension of $(s,t)$-Furstenberg sets; this can be seen as a continuous analog of the Szemerédi-Trotter incidence bound. Recent works investigating this problem include \cite{LutzStull20, HSY22, DiBenedettoZahl21, DOV22, FuRen22}. The best currently known bounds are summarized as follows. Suppose first that $t\le 1+\e'(s,t)$ (where $\e'(s,t)$ is a small positive parameter). Then (see \cite{MolterRela12, LutzStull20, HSY22, OrponenShmerkin21})
\[
\gamma(s,t) \ge \left\{ \begin{array}{ll}
                  s+t & \text{if } t\le s \\
                  2s + \e(s,t) & \text{if }  s\le t\le 2s-\e'(s,t), \\
                  s+\frac{t}{2} & \text{if }  2s-\e'(s,t) \le t
                \end{array} \right..
\]
Suppose now that $t \ge 1+\e'(s,t)$. Then (see \cite{FuRen22})
\[
\gamma(s,t) \ge \left\{\begin{array}{ll}
                2s+t-1 & \text{if }  s+t\le 2\\
                  s + 1 & \text{if }  s+t\ge 2
                 \end{array}  \right..
\]
The bounds $s+t$, $s+1$ are sharp in the respective regimes, but the other bounds are not expected to be sharp. In this article we obtain a small improvement upon the $s+t/2$ bound:
\begin{thm} \label{thm:Furstenberg}
Given $s\in (0,1]$, $t\in (0,2]$, there is $\e(s,t)>0$ such that the following holds. Let $K$ be an $(s,t)$-Furstenberg set. Then
\[
\hdim(K) \ge s+\frac{t}{2}+\e(s,t).
\]
\end{thm}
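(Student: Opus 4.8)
The plan is to deduce Theorem~\ref{thm:Furstenberg} from a \emph{single-scale} discretized incidence estimate that carries an $\e$-gain over the Wolff-type exponent $s+t/2$ and holds under only the weakest (Frostman-type) non-concentration hypotheses; this estimate is what ``extends Bourgain's discretized projection and sum--product theorems'', and its proof combines the recent discretized incidence bound of Orponen and the first author with Bourgain's theorems, the two being glued together by a point--line duality that trades an $(s,t)$-configuration for a $(t/2,s+t/2)$-configuration. Throughout, the only interesting range is $2s<t$: for $t\le 2s$ the bound $s+t/2+\e$ is already contained in the entries $\gamma(s,t)\ge s+t$ and $\gamma(s,t)\ge 2s+\e$ of the table above.

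\emph{Step 1 (reduction to a discretized statement).} Suppose, toward a contradiction, that $K$ is an $(s,t)$-Furstenberg set with $\hdim K<s+\tfrac{t}{2}+\e$, the value $\e=\e(s,t)$ to be fixed at the end. Choose a Frostman measure $\mu$ on $K$ of exponent $>s+\tfrac{t}{2}-\e$, a Frostman measure $\nu$ on $\mathcal L$ of exponent $>t-\e$, and, for $\nu$-a.e.\ $\ell$, a Frostman measure on $K\cap\ell$ of exponent $>s-\e$. Passing to a fine scale $\delta$ and running the by-now-standard dyadic pigeonholing/uniformization, one extracts a $\delta$-separated set $P$ of $\approx\delta^{-(s+t/2)}$ points carrying a large $\mu$-mass, a $(\delta,t)$-set $\mathcal T$ of $\approx\delta^{-t}$ $\delta$-tubes, and for each $T\in\mathcal T$ a $(\delta,s)$-subset $P_T\subset P$ of $\approx\delta^{-s}$ points contained in $T$. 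Since $\sum_{T}|P_T|\approx\delta^{-(s+t)}$ while $|P|\approx\delta^{-(s+t/2)}$, a typical $p\in P$ lies in $\approx\delta^{-t/2}$ tubes of $\mathcal T$: the configuration is \emph{extremal} for the known bound $\gamma(s,t)\ge s+t/2$. It therefore suffices to exclude extremal configurations of this type (with the $\e$-slack above) at the finest scale; once the discretized bound is stated under minimal non-concentration, this promotion to the Hausdorff statement is routine, as in the proofs of the earlier entries of the table.

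\emph{Step 2 (duality).} Apply the projective point--line duality of the plane, restricted to a bounded chart where it is bi-Lipschitz, carrying $\delta$-balls to $\delta$-tubes and $\delta$-tubes to $\delta$-balls and preserving incidences. The extremal configuration becomes a $(\delta,t)$-set $Q$ of $\approx\delta^{-t}$ points together with a $(\delta,s+t/2)$-set $\mathcal S$ of $\approx\delta^{-(s+t/2)}$ tubes, each line of $\mathcal S$ meeting $Q$ in $\approx\delta^{-t/2}$ points and each point of $Q$ lying on $\approx\delta^{-s}$ lines of $\mathcal S$; that is, at scale $\delta$, $Q$ is an extremal $(t/2,s+t/2)$-Furstenberg configuration. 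The key bookkeeping is that $(s,t)\mapsto(s',t'):=(t/2,\,s+t/2)$ sends the inequality $2s<t$ to $t'<2s'$. Thus the $(s,t)$-problem in the range where $s+t/2$ is expected non-sharp --- and where only the elementary bound is unconditional --- is converted into a $(t/2,s+t/2)$-problem sitting in the ``$t'<2s'$'' range, which is exactly the range in which the discretized incidence bound of Orponen and the first author produces a gain over $2s'=t$.

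\emph{Step 3 (the $\e$-gain, and the main obstacle).} It remains to show that $Q$ cannot be covered by only $\approx\delta^{-t}$ balls. Feeding the Orponen--Shmerkin discretized incidence bound into $(Q,\mathcal S)$ produces, in this regime, a trichotomy: either one already wins a power $\delta^{-\e}$ in the incidence count, contradicting the extremality $|Q|\approx\delta^{-t}$; or $Q$, or its fibres on the lines of $\mathcal S$, concentrate near a point or a line, which is excluded by the non-concentration recorded in Step~1; or the fibre data --- which is precisely the data of the projections of $Q$ being abnormally small in a $(\delta,s+t/2)$-set of directions --- is so rigid that a Pl\"unnecke--Ruzsa and Balog--Szemer\'edi--Gowers analysis forces an approximate product/progression structure on the relevant sets, contradicting Bourgain's discretized sum--product theorem and, equivalently, the $\e$-gain in Bourgain's discretized projection theorem. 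Making all three alternatives quantitative, and checking that the duality of Step~2 transports the weak (Frostman) non-concentration we actually have into non-concentration strong enough to drive the incidence and projection inputs --- while those inputs themselves must be re-established assuming only this minimal non-concentration, rather than the Ahlfors-regularity used in earlier work --- is where essentially all the difficulty lies, and is exactly the technical core that makes the conclusion valid for an \emph{arbitrary} Furstenberg set.
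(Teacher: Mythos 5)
Your overall skeleton is the right one and matches the paper's: reduce to a single-scale discretized incidence estimate (Proposition \ref{prop:main}), observe that an extremal $(s,t)$-configuration is, after swapping the roles of points and tubes, a $(t/2,s+t/2)$-Furstenberg configuration, note that $(s,t)\mapsto (t/2,s+t/2)$ sends the range $2s<t$ into the range where the $\e$-improved incidence bound of \cite[Theorem 3.2]{OrponenShmerkin21} applies, and use that bound to contradict $|Q|_{\delta}\approx\delta^{-t}$. The passage from the discretized statement back to Hausdorff dimension in your Step 1 is indeed routine (\cite[Lemma 3.3]{HSY22}), and your reduction to $2s<t$ is legitimate.

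The gap is in the hand-off from Step 2 to Step 3. To apply \cite[Theorem 3.2]{OrponenShmerkin21} to $(Q,\mathcal{S})$ you need each fibre $Q\cap\ell$, $\ell\in\mathcal{S}$, to be a $(\delta,t/2)$-\emph{set}, i.e.\ non-concentrated at every intermediate scale. What extremality gives you is only the cardinality $|Q\cap\ell|_{\delta}\approx\delta^{-t/2}$; those points could a priori be packed into a short subsegment of $\ell$. The non-concentration ``recorded in Step 1'' does not supply what is missing: the fibre $Q\cap\ell_x$ is dual to the family of lines of $\mathcal{L}$ through a fixed point $x\in K$, and none of the Frostman hypotheses on $K$, on $\mathcal{L}$, or on the sets $K\cap\ell$ controls the distribution of slopes of lines through a single point. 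Your Step 3 ``trichotomy'' invoking Pl\"unnecke--Ruzsa, Balog--Szemer\'edi--Gowers and the sum-product theorem is not how this is resolved (and is not needed, since \cite[Theorem 3.2]{OrponenShmerkin21} is used as a black box). What actually fills the gap in the paper is a multi-scale induction via Proposition \ref{prop:induction-on-scales}: one runs the elementary bounds (Lemmas \ref{lem:large-tube}--\ref{lem:small-tube}) inside every dyadic square of side $\rho^{j}$ across $N\approx\eta^{-1}$ intermediate scales, and shows that failure of the desired gain at \emph{any} square and scale already yields \eqref{eq:furst-gain}; in the remaining case one gets $|P\cap T\cap Q'|_{\delta}\approx|P\cap Q'|_{\delta}^{1/2}$ uniformly over such squares, which, combined with $P$ being a $(\delta,t)$-set, is what upgrades each fibre to a genuine $(\delta,t/2,\delta^{-O(\eta)})$-set. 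This is the technical core of the argument and is absent from your proposal.
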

Theorem \ref{thm:classical-furst} is an immediate corollary, taking $t=1$.

\subsection{Discretized incidence estimates and a strengthening of Bourgain's discretized projection and sum-product theorems}

Theorem \ref{thm:Furstenberg} is a consequence of the following discretized incidence estimate. See \S\ref{subsec:delta-s-sets} for the definition of $(\delta,s,C)$-sets of points and tubes.
\begin{proposition} \label{prop:main}
Given $s\in (0,1)$ and $t\in (s,2]$, there are $\e,\eta>0$ such that the following holds for small enough $\delta$.

Let $P\subset B^2(0,1)$ be a $(\delta,t,\delta^{-\e})$-set. For each $p\in P$, let $\mathcal{T}(p)$ be a $(\delta,s,\delta^{-\e})$-set of tubes through $p$ with $|\mathcal{T}(p)|\ge M$. Then the union $\mathcal{T}=\cup_{p\in P}\mathcal{T}(p)$ satisfies
\begin{equation} \label{eq:furst-gain}
|\mathcal{T}|\ge M \delta^{-(t/2+\eta)}.
\end{equation}
\end{proposition}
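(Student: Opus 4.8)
The mechanism behind \eqref{eq:furst-gain} is a duality: for $(s,t)$-Furstenberg configurations the elementary bound $\gamma(s,t)\ge s+t/2$ is the point--line dual of the bound $\gamma(\sigma,u)\ge 2\sigma$ for $(\sigma,u)$-configurations, and the two coincide exactly when $(\sigma,u)=(t/2,\,s+t/2)$. Thus a discretized $(s,t)$-configuration nearly saturating \eqref{eq:furst-gain} becomes, after counting incidences, a discretized $(t/2,\,s+t/2)$-configuration nearly saturating the ``$2\sigma$'' bound --- and this is exactly what the recent discretized incidence bound of Orponen and the first author \cite{OrponenShmerkin21} precludes, in its range of validity $\sigma\le u<2\sigma$, which corresponds to $t>2s$. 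The plan is therefore: first handle $t\in(s,2s]$, where $s\ge t/2$ and $(P,\{\mathcal T(p)\}_{p})$ already satisfies the hypotheses of \cite{OrponenShmerkin21} in its admissible regime, so that its conclusion gives the stronger estimate $|\mathcal T|\ge M\delta^{-(s+\eta)}\ge M\delta^{-(t/2+\eta)}$; then, for $t>2s$, argue by contradiction.

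Assume $t>2s$ and, toward a contradiction, $|\mathcal T|<M\delta^{-(t/2+\eta)}$, with $\eta>0$ small and chosen last. After the routine normalizations --- refine $P$ so that $|\mathcal T(p)|\approx M$; dyadically pigeonhole so that each tube of $\mathcal T$ lies in $\mathcal T(p)$ for a comparable number $N$ of $p$'s; and run a multiscale regularization so that the relevant cardinalities branch uniformly across dyadic scales --- count incidences: $N\,|\mathcal T|\gtrsim\sum_{p}|\mathcal T(p)|\gtrsim|P|\,M\approx\delta^{-t+O(\e)}M$. Writing $|\mathcal T|=\delta^{-u+O(\e)}$ and $\sigma:=s+t-u$ (so $N\gtrsim\delta^{-\sigma+O(\e)}$; for $M\not\approx\delta^{-s}$, replace $s$ by $\log M/\log\tfrac1\delta$ throughout), and noting that the $\approx\delta^{-\sigma}$ points assigned to a tube $T$ lie on $T$, we see that $P$ --- of cardinality $\approx\delta^{-t}$ --- is covered by the family $\mathcal T$ of $\approx\delta^{-u}$ tubes, with $\approx\delta^{-\sigma}$ of its points on each; moreover $\sigma\in(t/2-\eta-O(\e),\,t/2]$, the upper bound coming from the known inequality $u\ge s+t/2$. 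The point of the multiscale step is precisely to ensure that, at a suitable scale, $\mathcal T$ is a $(\delta,u,\delta^{-O(\e)})$-set of tubes and the point sets on the tubes are $(\delta,\sigma,\delta^{-O(\e)})$-sets --- i.e.\ that $P$ is a genuine discretized $(\sigma,u)$-Furstenberg set, the dual $(t/2,\,s+t/2)$-configuration.

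Now apply the discretized incidence bound of \cite{OrponenShmerkin21} to this configuration. One checks $\sigma\le u<2\sigma$: the first inequality is $s+t\le 2u$, true since $u\ge s+t/2$; the second is $u<\tfrac23(s+t)$, which follows from $u<s+t/2+\eta$ and $s+t/2<\tfrac23(s+t)\Leftrightarrow t>2s$ once $\eta$ is small enough. Hence \cite{OrponenShmerkin21} upgrades $\gamma(\sigma,u)\ge 2\sigma$ to $|P|\ge\delta^{-(2\sigma+\eta')}$ for some $\eta'=\eta'(s,t)>0$; since $|P|\approx\delta^{-t}$ this forces $t\ge 2\sigma+\eta'-O(\e)=2(s+t-u)+\eta'-O(\e)$, i.e.\ $u\ge s+t/2+\tfrac12\eta'-O(\e)$, contradicting $u<s+t/2+\eta$ for $\e,\eta$ small. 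This yields \eqref{eq:furst-gain} with $\eta\approx\tfrac12\eta'(s,t)$. (One should also note that $\eta'$ does not degenerate: with $t>2s$ fixed, the contradiction hypothesis confines $(\sigma,u)$ to a neighbourhood of $(t/2,\,s+t/2)$ that is compactly contained in the admissible range $\sigma\le u<2\sigma$.)

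The step I expect to be the main obstacle is the one summarized as ``multiscale regularization.'' Since the discretized dimension $u$ of the covering family is not known a priori, one must decompose the configuration across scales, locate a scale and sub-configuration on which the $(\delta,\sigma)$- and $(\delta,u)$-non-concentration really hold up to $\delta^{-O(\e)}$ losses --- in particular ruling out that the points of $P$ on a tube cluster, and (for $t\ge 1$) that the tubes through a typical point cluster in direction --- and carry the Furstenberg structure through this decomposition intact. This is where the duality with $(t/2,\,s+t/2)$-configurations is used in earnest, and where it matters that $P$ and the $\mathcal T(p)$ are honest $(\delta,\cdot,\delta^{-\e})$-sets rather than merely sets of the right cardinality.
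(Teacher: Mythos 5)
Your overall route coincides with the paper's: assume the gain fails, reinterpret the configuration as a discretized $(t/2,s+t/2)$-Furstenberg configuration, and play it against the Orponen--Shmerkin incidence theorem. But two steps have genuine gaps. First, the case $t\in(s,2s]$: the Orponen--Shmerkin theorem concludes that the union has covering number at least $\delta^{-2s-\eta}$; it does \emph{not} conclude $|\mathcal{T}|\ge M\delta^{-(s+\eta)}$. Since the hypotheses here allow $M$ to be much larger than $\delta^{-s}$ (up to roughly $\delta^{-1}$), your claimed bound in this regime does not follow from that theorem. Relatedly, the premise for your case split --- that the theorem is valid only for $\sigma\le u<2\sigma$ --- is not accurate: its range is $u\in(\sigma,2]$, and the paper exploits exactly this by applying it with $(\sigma,u)=(t/2,s+t/2)$ for \emph{all} $t\in(s,2]$, including $t<2s$ where $u>2\sigma$. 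The paper treats all $t$ uniformly, uses the incidence theorem only to force $|P_N|_\delta\ge\delta^{-t-3\eta}$, and then recovers the factor $M$ from the elementary bound $|\mathcal{T}|\gtrsim M|P_N|^{1/2}$ of Corollary \ref{cor:nice-classical}; this also disposes of your unjustified standing assumption $|P|\approx\delta^{-t}$ (if $|P_N|$ is larger than $\delta^{-t-2\eta}$ one is already done).

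Second, and more seriously, the step you defer as ``multiscale regularization'' is the actual content of the proof, and it cannot be achieved by uniformization and pigeonholing alone. Uniformity of $P$ over dyadic squares and a dyadic pigeonholing of tube multiplicities give a \emph{cardinality} lower bound $|T\cap P|_\delta\gtrapprox\delta^{-t/2+\eta}$ for typical tubes, but being a $(\delta,t/2)$-set requires non-concentration at every intermediate scale, and the contradiction hypothesis is a single global count that is perfectly consistent with $T\cap P$ clustering inside a $\Delta$-square; the trivial bound $|T\cap P\cap Q|_\delta\le|P\cap Q|_\delta\lesssim\delta^{-\e}|P|_\delta\Delta^{t}$ is far weaker than the needed $\lesssim\delta^{-O(\eta)}|T\cap P|_\delta\Delta^{t/2}$. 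The paper's mechanism is a dichotomy run at each of $N\approx\eta^{-1}$ scales, powered by Proposition \ref{prop:induction-on-scales}: if some tube concentrates in some square at some scale, then the factorization \eqref{eq:lower-bound-T}, combined with Corollary \ref{cor:nice-classical} at the coarse scale and Lemma \ref{lem:large-tube} inside the square, already yields the gain (property (d) in the paper's proof); only when no concentration occurs at any scale, and the alternative of Lemma \ref{lem:small-tube} also fails, does one conclude that $T\cap P_N$ is a $(\delta,t/2,\delta^{-7\eta})$-set, at which point the duality and the incidence theorem apply. In other words, one cannot ``locate a sub-configuration where the non-concentration really holds''; one must extract the gain directly from its failure, and your proposal supplies no mechanism for that. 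As written it is a correct plan for the easy half of the argument with the hard half acknowledged but missing.
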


By duality between points and lines (see e.g. \cite[Theorem 3.2]{OrponenShmerkin21} and the discussion afterwards) we obtain the following corollary which more closely resembles the Furstenberg set problem. In the statement, $|P|_{\delta}$ stands for the $\delta$-covering number of $P$ (see \S\ref{subsec:notation}).
\begin{cor}
Given $0<s<1$, $s<t$, there are $\e,\eta>0$ such that the following holds for small enough dyadic $\delta$. Let $\mathcal{L}$ be a $(\delta,t,\delta^{-\e})$-set of lines intersecting $B^2(0,1)$. For each $\ell\in\mathcal{L}$, let $P(\ell)$ be a $(\delta,s,\delta^{-\e})$-set contained in $\ell^{(\delta)}$. Suppose $|P(\ell)|_{\delta}\ge M$ for all $\ell\in\mathcal{L}$. Then the union $P=\cup_{\ell\in\mathcal{L}} P(\ell)$ satisfies
\[
|P|_{\delta} \ge  M \delta^{-(t/2+\eta)}.
\]
\end{cor}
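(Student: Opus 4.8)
The plan is to deduce this from Proposition \ref{prop:main} by means of the classical point--line duality of the plane, as in \cite[Theorem 3.2]{OrponenShmerkin21}; here $t\le 2$, as in Proposition \ref{prop:main}. Let $\e_0,\eta_0>0$ be the constants that proposition supplies for the given $s,t$; the corollary will hold with $\e:=\e_0/2$ and $\eta:=\eta_0/2$. Recall the duality: to a non-vertical line $\ell=\{(x,y):y=ax+b\}$ one assigns the point $\ell^{*}=(a,b)$, and to a point $p=(p_1,p_2)$ the line $p^{*}=\{(a,b):b=-p_1a+p_2\}$; then $p\in\ell$ if and only if $\ell^{*}\in p^{*}$, and this correspondence is bi-Lipschitz with uniform constants as long as the slopes and intercepts involved lie in a fixed bounded set avoiding the vertical direction. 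Since $\mathcal{L}$ and the $P(\ell)$ all lie in $B^2(0,1)$, after decomposing the set of line directions into $O(1)$ arcs and rotating on each, one may assume that all lines in $\mathcal{L}$ --- and all lines whose $\delta$-neighbourhood meets the corresponding $P(\ell)$ --- have slope and intercept in such a bounded set; one also passes to a maximal $\delta$-separated subfamily of $\mathcal{L}$. Because $\e<\e_0$, the resulting $O(1)$ losses in all the non-concentration constants are absorbed into $\delta^{-\e_0}$ once $\delta$ is small.

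On each such piece the duality sends $\delta$-tubes to $\delta$-balls and conversely, up to multiplicative constants. Hence the $(\delta,t,\delta^{-\e})$-set $\mathcal{L}$ of lines becomes a set $P^{*}:=\{\ell^{*}:\ell\in\mathcal{L}\}$, lying in a fixed ball and --- after a harmless rescaling --- in $B^2(0,1)$, whose non-concentration constant is at most $C\delta^{-\e}\le\delta^{-\e_0}$ for small $\delta$; that is, a $(\delta,t,\delta^{-\e_0})$-set of points. For each $\ell$, the $(\delta,s,\delta^{-\e})$-set $P(\ell)\subset\ell^{(\delta)}$ becomes a family of $\delta$-tubes each passing within $O(\delta)$ of $\ell^{*}$, with non-concentration constant $C\delta^{-\e}$; after fattening by a constant and recentering, this is a $(\delta,s,\delta^{-\e_0})$-set $\mathcal{T}(\ell^{*})$ of tubes through $\ell^{*}$, and the hypothesis $|P(\ell)|_{\delta}\ge M$ becomes $|\mathcal{T}(\ell^{*})|\ge cM$.

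Now apply Proposition \ref{prop:main} to the point set $P^{*}$ and the tube families $\mathcal{T}(\ell^{*})$, with common lower bound $cM$, to get
\[
\Big|\bigcup_{\ell\in\mathcal{L}}\mathcal{T}(\ell^{*})\Big|\ \ge\ cM\,\delta^{-(t/2+\eta_0)} .
\]
Dualizing back, $\bigcup_{\ell}\mathcal{T}(\ell^{*})$ corresponds to $P=\bigcup_{\ell}P(\ell)$, so the left-hand side is comparable to $|P|_{\delta}$; therefore $|P|_{\delta}\ge c'M\,\delta^{-(t/2+\eta_0)}\ge M\,\delta^{-(t/2+\eta_0/2)}=M\,\delta^{-(t/2+\eta)}$ for $\delta$ small. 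Finally, since the initial decomposition produced only $O(1)$ pieces, one of them carries a definite fraction of $\mathcal{L}$, and the estimate for that piece gives the estimate in general (the extra constant again absorbed by shrinking $\delta$).

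The only step requiring genuine care is the metric bookkeeping just invoked: $\delta$-tubes and $\delta$-balls correspond only up to constants, and only away from the vertical direction, so one must check that the non-concentration inequality defining a $(\delta,s,C)$-set survives a bounded bi-Lipschitz change of coordinates --- it does, since one is free to enlarge $C$ by a constant and the inequality lives at the single scale $\delta$ --- and that the additive $O(\delta)$ discrepancies (such as $P(\ell)\subset\ell^{(\delta)}$ rather than $P(\ell)\subset\ell$) can be absorbed into a constant fattening of tubes. These verifications are routine and are carried out in the references cited above, so we do not reproduce them.
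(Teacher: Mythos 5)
Your proposal is correct and follows exactly the route the paper intends: the paper derives this corollary from Proposition \ref{prop:main} purely by the standard point--line duality (citing \cite[Theorem 3.2]{OrponenShmerkin21} for the bookkeeping), which is precisely what you carry out, including the correct handling of the bi-Lipschitz distortion of the non-concentration constants and the $O(\delta)$ fattening. No discrepancies to report.
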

Note that $M\ge \delta^{-(t-\e)}$ and hence (up to changing the values of $\eta,\e$) we also have the conclusion $|\mathcal{T}|\ge \delta^{-(s+t/2+\eta)}$. In turn, by \cite[Lemma 3.3]{HSY22} this yields Theorem \ref{thm:Furstenberg}.

The non-concentration assumption on $\mathcal{T}(p)$ in Proposition \ref{prop:main} is quite mild, since $M$ can potentially be much larger than $\delta^{-s}$ (and a $\delta^{-\e}$ factor is also allowed). What about $P$? \emph{Some} non-concentration is needed, as the following standard example shows: if $P= B(x,r)$ and $\mathcal{T}(p)$ is the set of tubes through $p$ with slopes in a fixed $(\delta,s,C)$-set $\mathcal{S}$, then
\[
|\mathcal{T}| \sim |\mathcal{S}|_{\delta}\cdot \frac{r}{\delta} \sim |\mathcal{T}(p)|\cdot |P|_{\delta}^{1/2}.
\]
A similar estimate holds if $P$ is very dense in the union of a small number of $r$-balls. The next result asserts that under a minimal single-scale non-concentration assumption on $P$ that rules out this scenario, there is a gain over the ``trivial'' estimate of Corollary \ref{cor:nice-classical}:
\begin{thm} \label{thm:Furstenberg-Bourgain}
Given $s,u\in (0,1)$, there are $\e,\eta>0$ such that the following holds for small enough dyadic $\delta$.

Let $P\subset B^2(0,1)$ be set such that
\begin{equation} \label{eq:single-scale-non-conc}
\left|P\cap B\left(x,\delta |P|_{\delta}^{1/2}\right)\right|_{\delta} \le \delta^u |P|_{\delta}, \quad x\in B^2(0,1).
\end{equation}
For each $p\in P$, let $\mathcal{T}(p)$ be a $(\delta,s,\delta^{-\e})$-set of dyadic tubes through $p$ with $|\mathcal{T}(p)|\ge M$. Then the union $\mathcal{T}=\cup_{p\in P}\mathcal{T}(p)$ satisfies
\begin{equation} \label{eq:furst-gain}
|\mathcal{T}|\ge \delta^{-\eta} M |P|_{\delta}^{1/2}.
\end{equation}
\end{thm}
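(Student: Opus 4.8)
The plan is to deduce \ref{thm:Furstenberg-Bourgain} from the discretized incidence bound of Orponen and the first author \cite{OrponenShmerkin21} by means of the duality between $(s,t)$- and $(t/2,s+t/2)$-Furstenberg configurations, with the scale $R:=\delta|P|_{\delta}^{1/2}$ appearing in \eqref{eq:single-scale-non-conc} serving as the pivotal scale. Set $N:=|P|_{\delta}$ and $t:=\log_{1/\delta}N$. Since $M\asymp\delta^{-s}$ by the definition of a $(\delta,s,\delta^{-\e})$-set, we have $MN^{1/2}\asymp\delta^{-(s+t/2)}$, so the target $|\mathcal T|\ge\delta^{-\eta}MN^{1/2}$ can be rewritten as $|\mathcal T|\ge\delta^{-(s+t/2+\eta)}$, the shape of Proposition~\ref{prop:main}; the difference is that here the only non-concentration assumed of $P$ holds at the single scale $R$ rather than at all scales.

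First I would make the usual normalisations: dyadic pigeonholing to get $|\mathcal T(p)|\asymp M$ for all $p\in P$, to get every tube in $\mathcal T$ meeting $P$ in a comparable number of $\delta$-balls, and — after passing to subsets — to make the configuration essentially uniform across the auxiliary scales $R$ and $\delta/R$; each such step costs only a $\delta^{-O(\e)}$ factor, which is harmless once $\e$ is small compared with $u$ and with the final $\eta$. A short computation with \eqref{eq:single-scale-non-conc} gives $t\ge u$ and $|P|_{R}\ge\delta^{-u}$: the set $P$ genuinely spreads over many balls of radius $R$, each containing only a $\delta^{u}$-fraction of $P$, which is exactly what excludes the extremal example recalled before the statement.

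The decisive step is the duality. Starting from $(P,\mathcal T)$ viewed at the scale $R$ — where the ``long'' intersections $P\cap T$ have cardinality $\asymp N^{1/2}\asymp R/\delta$, hence are $\tfrac t2$-dimensional relative to the scale $R$ — one produces a discretized $(t/2,s+t/2)$-Furstenberg incidence configuration of comparable complexity, with those intersections as fibres, whose associated union has $\delta$-covering number $\lesssim|\mathcal T|\,\delta^{-(t/2-s)}$. When $t>2s$ — the case carrying all the new content, since for $t=1$ this reads $s<1/2$ — the exponents satisfy $t/2<s+t/2<t=2\cdot(t/2)$, so the dual configuration lies strictly inside the regime $\sigma<\tau<2\sigma$ in which \cite{OrponenShmerkin21} applies; that bound forces the dual union to have covering number $\ge\delta^{-(t+\e)}$, whence $|\mathcal T|\gtrsim\delta^{-(t+\e)}\,\delta^{t/2-s}=\delta^{-(s+t/2+\e)}$, which is the desired conclusion with $\eta$ equal to a definite fraction of the epsilon of \cite{OrponenShmerkin21}. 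What makes the transposed data a genuine pair of $(\delta,\cdot)$-sets, so that \cite{OrponenShmerkin21} is applicable, is precisely the bound $|P|_{R}\ge\delta^{-u}$ and the per-$R$-ball control of \eqref{eq:single-scale-non-conc}, together with the uniformity arranged above. In the complementary range $t\le 2s$ — and, more generally, whenever $P$ is so sparse at scale $R$ that $|P|_{R}$ already beats $\delta^{-u}$ by a definite power — the required gain is instead harvested directly from that count, after running the dichotomy above across the intermediate scales between $\delta$ and a fixed coarse scale and consolidating the per-scale improvements by a multiplicativity-of-gains bookkeeping.

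The step I expect to be the main obstacle is the verification underlying the duality: the estimate of \cite{OrponenShmerkin21}, like every Furstenberg-type bound, is stated for honest $(\delta,\cdot)$-sets — non-concentration at all scales — whereas \eqref{eq:single-scale-non-conc} controls $P$ at the scale $R$ only. The duality is what bridges this gap, but to make the transposed configuration multi-scale non-concentrated one must perform the transposition on carefully chosen sub-configurations (uniform at the scales $R$ and $\delta/R$, with a well-defined effective dimension), control the accumulated pigeonholing losses against $\eta$, and ensure throughout that the strict inclusion $t/2<s+t/2<t$ — hence applicability of \cite{OrponenShmerkin21}, whose quantitative behaviour degenerates as $t\downarrow 2s$ — survives; it is the simultaneous management of all of these, rather than any single estimate, that is delicate.
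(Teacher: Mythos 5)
There is a genuine gap, and it sits exactly where you flag ``the main obstacle'': nothing in your proposal actually bridges it. The duality step requires the fibres $T\cap P$ (and the transposed point set) to be honest $(\delta,\cdot)$-sets, i.e.\ non-concentrated at \emph{all} scales, before \cite[Theorem 3.2]{OrponenShmerkin21} can be invoked. In the proof of Proposition \ref{prop:main} this is earned, not assumed: one uses that $P$ is a $(\delta,t,\delta^{-\e})$-set at every scale, runs the elementary bounds (Lemmas \ref{lem:large-tube}--\ref{lem:small-tube}) inside $\Delta$-squares at a dense family of intermediate scales via Proposition \ref{prop:induction-on-scales}, and only then concludes that, absent a gain, $T\cap P$ is a $(\delta,t/2)$-set. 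Under the hypothesis \eqref{eq:single-scale-non-conc} you control $P$ at the single scale $R=\delta|P|_\delta^{1/2}$ and at no intermediate scale, so the dichotomy cannot be run between $\delta$ and $R$ (or between $R$ and $1$), ``$t=\log_{1/\delta}|P|_\delta$'' is not a Frostman exponent for $P$, and the claim that the long intersections are ``$t/2$-dimensional relative to the scale $R$'' has no justification; choosing uniform sub-configurations at the two scales $R$ and $\delta/R$ does not create non-concentration where none is assumed. (Two smaller errors: $M\asymp\delta^{-s}$ is false --- $M$ may be much larger than $\delta^{-s}$, as the paper stresses, so the target cannot be rewritten as $\delta^{-(s+t/2+\eta)}$; and the restriction $t>2s$ for applicability of \cite{OrponenShmerkin21} is not the relevant condition, since in the dual configuration the exponents are $t/2<s+t/2$ automatically.)

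The missing ingredient is a mechanism that converts the single-scale hypothesis \eqref{eq:single-scale-non-conc} into genuine Frostman-type non-concentration on blocks of scales. The paper does this with \cite[Theorem 4.1]{Shmerkin22b}, recalled as Theorem \ref{thm:multiscale-decomposition}: after uniformization, $[\delta,1]$ is partitioned into scale blocks $[\Delta_{j+1},\Delta_j]$ on which the renormalized pieces $S_Q(P\cap Q)$ are $(\lambda_j,\alpha_j,\lambda_j^{-\e})$-sets, with $\alpha_j\in[\xi,2-\xi]$ on a positive proportion $\xi\log(1/\delta)$ of the total logarithmic length. Then Proposition \ref{prop:main} (your duality argument, but applied where its hypotheses actually hold) gives a gain $\lambda_j^{-\eta}$ on each good block, Corollary \ref{cor:nice-classical} gives the trivial bound elsewhere, and Corollary \ref{cor:induction-on-multiscales} with \eqref{eq:lower-bound-T-multi} multiplies the per-block factors to yield $|\mathcal{T}|\ge\delta^{-\xi\eta+O(\e)}M|P|_\delta^{1/2}$. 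Your closing sentence about ``running the dichotomy across intermediate scales and consolidating by multiplicativity-of-gains bookkeeping'' gestures at this architecture, but without the decomposition theorem of \cite{Shmerkin22b} (a substantial input, not a pigeonholing step) the plan cannot be executed.
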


This result extends Proposition \ref{prop:main} (however, the proposition is used in the proof) and due to the minimal non-concentration assumption it provides new information even when $|P|_{\delta}\gg\delta^{-1}$. Perhaps more significantly, Theorem \ref{thm:Furstenberg-Bourgain} generalizes Bourgain's celebrated discretized projection theorem \cite[Theorem 3]{Bourgain10} (or even the refined version with single-scale non-concentration in \cite[Theorem 1.7]{Shmerkin22b}). Roughly speaking, Bourgain's Theorem corresponds to the special ``product'' case in which the slopes of the tubes in $\mathcal{T}(p)$ are (nearly) independent of $p$; see Section \ref{sec:Bourgain} for the details. This connection between Furstenberg sets and projections is well known, see e.g. \cite{Oberlin14} or \cite[\S 3.2]{OrponenShmerkin21}.  Bourgain's discretized projection theorem is used in the proof of \cite[Theorem 3.2]{OrponenShmerkin21}, which is in turn used to prove Theorem \ref{thm:Furstenberg-Bourgain}, so this does not provide a new proof of the projection theorem. Using a well-known argument of G.~Elekes, Theorem \ref{thm:Furstenberg-Bourgain} (or rather its dual formulation below) also easily recovers Bourgain's discretized sum-product theorem (\cite[Theorem 0.3]{Bourgain03}, \cite[Proposition 3.2]{BourgainGamburd08}). The details are sketched in Section \ref{sec:Bourgain}. It is worth noting that although Bourgain's discretized sum-product and projection theorems are closely connected to each other, deducing either from the other takes a substantial amount of work, while they are both rather direct corollaries of Theorem \ref{thm:Furstenberg-Bourgain}.

By duality between points and lines (again we refer to \cite[Theorem 3.2]{OrponenShmerkin21} for details), we have the following corollary of Theorem \ref{thm:Furstenberg-Bourgain}:
\begin{cor} \label{cor:Furstenberg-Bourgain}
Given $s,u\in (0,1)$, there are $\e,\eta>0$ such that the following holds for small enough dyadic $\delta$.

Let $\mathcal{T}\subset\mathcal{T}^{\delta}$ be a set of dyadic tubes such that
\[
\big| \{ T\in\mathcal{T}: T\subset \mathbf{T}\} \big| \le \delta^u |\mathcal{T}|
\]
for all $(\delta |\mathcal{T}|^{1/2})$-tubes $\mathbf{T}$.

For each $T\in \mathcal{T}$, let $P(T)\subset T$ be a $(\delta,s,\delta^{-\e})$-set with $|P(T)|_{\delta}\ge M$. Then $P=\cup_{T\in \mathcal{T}} P(T)$ satisfies
\[
|P|\ge \delta^{-\eta} M |T|^{1/2}.
\]
\end{cor}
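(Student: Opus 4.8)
The plan is to deduce Corollary~\ref{cor:Furstenberg-Bourgain} from Theorem~\ref{thm:Furstenberg-Bourgain} by means of the discretized point--line duality in the plane, in the form recorded in \cite[Theorem~3.2]{OrponenShmerkin21}. Recall that this duality $\mathcal{D}$ interchanges points $(p_1,p_2)$ (with, say, $|p_1|\le 1$) and non-vertical lines via $(p_1,p_2)\mapsto\{(x,y):y=-p_1x+p_2\}$; it is involutive, carries $\delta$-incidences to $\delta$-incidences, maps $(\delta,s,C)$-sets of points to $(\delta,s,C')$-sets of tubes and conversely, maps $r$-tubes to $r$-balls (and back) up to multiplicative constants, and hence distorts $\delta$-covering numbers by at most a constant factor, with all constants depending only on the fixed bounded region in which one works.

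First I would carry out routine reductions to a bounded, well-behaved configuration. Splitting $\mathcal{T}$ into $O(1)$ subfamilies according to a coarse quantization of the tube directions and keeping the largest one costs only a constant factor (to be absorbed below) and lets us assume all tubes in $\mathcal{T}$ have slope in a fixed short interval, so that $\mathcal{D}$ and $\mathcal{D}^{-1}$ are well behaved on every object that appears; an affine rescaling (which changes $\delta$ by a constant factor, harmless since we may shrink $\delta$) puts the relevant dual points inside $B^2(0,1)$, and replacing dual tubes by comparable dyadic tubes again loses only constants.

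Now apply $\mathcal{D}$. The family $\mathcal{T}$ becomes a point set $P^\ast:=\mathcal{D}(\mathcal{T})\subset B^2(0,1)$ with $|P^\ast|\sim|\mathcal{T}|$. For each $T\in\mathcal{T}$ and each $p\in P(T)\subset T$ the dual incidence $\mathcal{D}(T)\in\mathcal{D}(p)$ holds, so $\mathcal{T}^\ast(T):=\mathcal{D}(P(T))$ is a set of dyadic tubes all passing through the single point $\mathcal{D}(T)\in P^\ast$, and it is a $(\delta,s,\delta^{-O(\e)})$-set of tubes with $|\mathcal{T}^\ast(T)|\sim|P(T)|_{\delta}\ge cM$. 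A $(\delta|\mathcal{T}|^{1/2})$-tube $\mathbf{T}$ dualizes to a ball of radius $\sim\delta|\mathcal{T}|^{1/2}\sim\delta|P^\ast|^{1/2}$, and the tubes of $\mathcal{T}$ contained in $\mathbf{T}$ correspond to the points of $P^\ast$ in that ball, so the non-concentration hypothesis on $\mathcal{T}$ turns into
\[
\bigl|P^\ast\cap B\bigl(x,\delta|P^\ast|^{1/2}\bigr)\bigr|_{\delta}\le \delta^{u/2}|P^\ast|,\qquad x\in B^2(0,1),
\]
for $\delta$ small enough, the extra room absorbing the constant distortion. Invoking Theorem~\ref{thm:Furstenberg-Bourgain} with non-concentration parameter $u/2$ (and $\e$ replaced by a small constant multiple of itself) yields $\e,\eta'>0$ depending only on $s,u$ with
\[
\Bigl|\,\bigcup_{p\in P^\ast}\mathcal{T}^\ast(p)\,\Bigr|\ \ge\ \delta^{-\eta'}\,M\,|P^\ast|^{1/2}.
\]
Since $\bigcup_{p\in P^\ast}\mathcal{T}^\ast(p)=\mathcal{D}(P)$ with $|\mathcal{D}(P)|\sim|P|_{\delta}=|P|$, while $|P^\ast|^{1/2}\sim|\mathcal{T}|^{1/2}$, transforming back gives $|P|\ge\delta^{-\eta}M|\mathcal{T}|^{1/2}$ once $\eta<\eta'$ and $\delta$ is small enough.

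The only real work is the bookkeeping in the discretized duality: one has to verify that every hypothesis of Theorem~\ref{thm:Furstenberg-Bourgain} --- and especially the single-scale non-concentration condition attached to the delicate scale $\delta|P|_{\delta}^{1/2}$ --- transforms with admissible parameters, and that the constant-factor losses coming from quantization, rescaling, dyadic rounding, and the duality itself can all be absorbed by slightly worsening $\e$, $\eta$ and $u$. None of this is hard, but it must be done with care, and for the precise properties of $\mathcal{D}$ we refer to \cite[Theorem~3.2]{OrponenShmerkin21} and the discussion following it.
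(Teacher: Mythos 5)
Your proposal is correct and follows exactly the route the paper takes: the paper derives Corollary \ref{cor:Furstenberg-Bourgain} from Theorem \ref{thm:Furstenberg-Bourgain} purely by the point--line duality of \cite[Theorem 3.2]{OrponenShmerkin21}, with no further detail given. Your write-up simply makes explicit the bookkeeping (incidence preservation, $(\delta,s,C)$-set transference, and the dualization of the $(\delta|\mathcal{T}|^{1/2})$-tube non-concentration into the single-scale ball condition \eqref{eq:single-scale-non-conc}), all of which is handled correctly.
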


\subsection{Sketch of proof}

Many ``$\e$-improvements'' in discretized geometry are obtained by showing that, in the absence of it, the relevant geometric object has a rigid structure that eventually is shown to contradict some previously known bounds (often involving some other ``$\e$-improvement''). This is also the approach we take here. By the simple elementary bounds in Lemmas \ref{lem:large-tube}--\ref{lem:small-tube}, one obtains the improved bound in Proposition \ref{prop:main} unless
\begin{align}
|P\cap T|_{\delta} &\approx |P|_\delta^{1/2},\quad T\in\mathcal{T},\nonumber \\
|P|_{\delta} &\approx \delta^{-t}. \label{eq:P-small}
\end{align}
(In this section the notation $\approx$ should be interpreted informally as ``up to small powers of $\delta$''). A first ingredient of the proof is showing that (after suitable refinements of $P$ and $\mathcal{T}$) one also gets the desired conclusion unless
\begin{equation} \label{eq:uniformity-cubes}
|P\cap T\cap Q|_{\delta} \approx |P\cap Q|_{\delta}^{1/2},
\end{equation}
for all $T\in\mathcal{T}$, all $\Delta$-squares $Q$ intersecting $P$ and a ``dense'' set of scales $\delta<\Delta<1$. To see this, we combine the elementary bounds applied to $P\cap Q$ (and a sub-system of tubes passing through $P\cap Q$) with an induction-on-scales mechanism from \cite{OrponenShmerkin21}, recalled as Proposition \ref{prop:induction-on-scales} below.

The relation \eqref{eq:uniformity-cubes} can be shown to imply that either one gets the improved bound we are seeking, or $P$ intersects each tube $T$ in a $(\delta,t/2)$-set. By (known) elementary arguments $\mathcal{T}$ is a $(\delta,s+t/2)$-set of tubes. Hence $P$ is a discretized $(t/2,s+t/2)$-Furstenberg set. But it was shown in \cite[Theorem 3.2]{OrponenShmerkin21} that this implies that $|P|_{\delta}\ge \delta^{-t-\eta}$ for some $\eta=\eta(t/2,s+t/2)>0$.  This contradicts \eqref{eq:P-small}, showing the impossibility of the rigid configuration described by \eqref{eq:uniformity-cubes} and hence establishing Proposition \ref{prop:main}. To our knowledge this dual relationship between $(s,t)$ and $(t/2,s+t/2)$-Furstenberg sets hadn't been noticed before.

We obtain Theorem \ref{thm:Furstenberg-Bourgain} by applying Proposition \ref{prop:main} to each scale in a multiscale decomposition of $P$ into ``non-trivial Frostman pieces'' that was established in \cite{Shmerkin22b}, and is recalled as Theorem \ref{thm:multiscale-decomposition} below. The scales are combined together by another application of Proposition \ref{prop:induction-on-scales}.

\section{Preliminaries}

\subsection{Notation}
\label{subsec:notation}

The notation $A\lesssim B$ or $A=O(B)$ stands for $A\le C\cdot B$ for some constant $C>0$; similarly for $A\gtrsim B$, $A\sim B$. The $\delta$-covering number of a set $X$ (in a metric space) is defined as the smallest number of $\delta$-balls needed to cover $X$, and is denoted $|X|_{\delta}$. The open $r$-neighbourhood of a set $X$ is denoted by $X^{(r)}$.

\subsection{$(\delta,s)$-sets of points and tubes}
\label{subsec:delta-s-sets}

Given $r\in 2^{-\N}$, we denote the family of (half-open) dyadic cubes of side-length in $\R^d$ by $\mathcal{D}_r$. The set of cubes in $\mathcal{D}_r$ intersecting a set $X$ is denoted by $\mathcal{D}_r(X)$.

In this article, we work with the following notion of discretization of sets of dimension $s$:
\begin{definition}[$(\delta,s,C)$-set]\label{def:delta-s-set}
Let $P \subset \R^{d}$ be a bounded nonempty set, $d \geq 1$. Let $\delta \in 2^{-\N}$, $0 \leq s \leq d$, and $C > 0$. We say that $P$ is a \emph{$(\delta,s,C)$-set} if
\begin{equation}\label{eq:delta-s-set}
|P \cap Q|_{\delta} \leq C \cdot |P|_{\delta} \cdot r^{s}, \qquad Q \in \mathcal{D}_{r}(\R^{d}), \, \delta \leq r \leq 1. \end{equation}
\end{definition}
If $P\subset\mathcal{D}_{\delta}$ (so $P$ is a family of dyadic cubes), we will abuse notation by identifying $P$ with $\cup P$, so it makes sense to speak of $(\delta,s,C)$-sets of dyadic cubes.

We also need to work with discretized families of tubes:
\begin{definition}[Dyadic $\delta$-tubes]\label{def:dyadicTubes} Let $\delta \in 2^{-\N}$. A \emph{dyadic $\delta$-tube} is a set of the form
\[
\mathbf{D}(p):= \{ (x,y): y=ax+b \text{ for some } (a,b)\in p\},
\]
where $p \in \mathcal{D}_{\delta}([-1,1)\times \R)$. The collection of all dyadic $\delta$-tubes is denoted
\[
\mathcal{T}^{\delta} := \{\mathbf{D}(p) : p \in \mathcal{D}_{\delta}([-1,1)\times \R)\}.
\]
A finite collection of dyadic $\delta$-tubes $\{\mathbf{D}(p)\}_{p \in \mathcal{P}}$ is called a $(\delta,s,C)$-set if $\mathcal{P}$ is a $(\delta,s,C)$-set.
\end{definition}

We remark that a dyadic $\delta$-tube is not exactly a $\delta$-neighbourhood of some line, but the intersection of
\[
T=\mathbf{D}([a,a+\delta]\times [b,b+\delta])
\]
with some fixed bounded set $B$ satisfies $\ell_{a,b}^{(c\delta)}\cap B\subset T\cap B\subset \ell_{a,b}^{(C\delta)}$, where $\ell_{a,b}=\{ y= (a+\delta/2)x+(b+\delta/2)\}$ and $c,C$ depend only
on $B$.

An elementary but important observation is that tubes in $\mathcal{T}^{\delta}$ that intersect a fixed  square $p\in\mathcal{D}_{\delta}$ are parametrized by their slope in a bilipschitz way. In particular, if $\mathcal{T}(p)\subset\mathcal{T}^{\delta}$ is a family of tubes intersecting $p\in\mathcal{D}_{\delta}$, then $\mathcal{T}(p)$ is a $(\delta,s,C)$-set if and only if the slopes of tubes in $\mathcal{T}(p)$ form a $(\delta,s,C'$) set for $C'\sim C$; see \cite[Corollary 2.12]{OrponenShmerkin21} for the precise statement.

\subsection{Elementary incidence bounds}

We now collect some elementary incidence bounds; they correspond in various ways to the lower bound $s+t/2$ for the dimension of $(s,t)$-Furstenberg sets. We state our bounds in terms of the following notion:
\begin{definition}
Fix $\delta\in 2^{-\mathbb{N}}$, $s\in [0,1]$, $C>0$,  $M\in\mathbb{N}$. We say that a pair $(P,\mathcal{T}) \subset \mathcal{D}_{\delta} \times \mathcal{T}^{\delta}$ s a \emph{$(\delta,s,C,M)$-nice configuration} if for every $p\in P$ there exists a $(\delta,s,C)$-set $\mathcal{T}(p) \subset\mathcal{T}$ with $ |\mathcal{T}(p)| = M$ and such that $T \cap p \neq\emptyset$ for all $T\in\mathcal{T}(p)$.
\end{definition}

\begin{lemma} \label{lem:large-tube}
Let $(P,\mathcal{T})$ be a $(\delta,s,C,M)$-nice configuration. Then for any $\delta$-tube $T$ (not necessarily in $\mathcal{T}$),
\[
|\mathcal{T}| \gtrsim C^{-1/s}\cdot  |T\cap P|_{\delta}\cdot M.
\]
\end{lemma}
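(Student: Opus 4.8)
The plan is to count incidences between $P$ and the fixed tube $T$ by a dyadic pigeonholing on how densely the points of $P$ that lie near $T$ are covered by tubes from the associated families $\mathcal{T}(p)$. First I would let $P_T = \{ p\in P : p\cap T\neq\emptyset\}$, so that $|P_T| \sim |T\cap P|_\delta$ up to an absolute constant (a fixed $\delta$-tube contains boundedly many $\delta$-cubes in each $\delta$-column, so the $\delta$-covering number of $T\cap P$ is comparable to the number of dyadic cubes of $P$ meeting $T$). The goal is then to show $|\mathcal{T}| \gtrsim C^{-1/s} |P_T| M$.

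The key geometric input is that any fixed $\delta$-tube $T'$ contains, for a given $p$, only very few tubes of $\mathcal{T}(p)$ — more precisely, the tubes of $\mathcal{T}(p)$ through $p$ that also meet $T$ (or are contained in a fixed slightly larger tube) have slopes confined to an interval of length $O(\delta / \text{something})$; but the cleanest version for us is: since $\mathcal{T}(p)$ is a $(\delta,s,C)$-set, its slope set is a $(\delta,s,C')$-set with $C'\sim C$, hence for any interval $I$ of length $r\ge\delta$ the number of slopes in $I$ is $\le C' r^s M$. Two points $p,p'\in P_T$ both lie in (the $O(\delta)$-neighbourhood of) the line defining $T$, so the tubes of $\mathcal{T}(p)$ and $\mathcal{T}(p')$ that happen to coincide as elements of $\mathcal{T}^\delta$ must, being forced to pass near both $p$ and $p'$, have slope constrained to an interval whose length depends on the separation $|p-p'|$ along $T$. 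Summing a tube $T'\in\mathcal{T}$ is therefore "shared" by at most roughly $C^{1/s}$ points of $P_T$ after a further dyadic decomposition of $P_T$ along $T$ into $\sim\log(1/\delta)$ scales — but we want to avoid the logarithmic loss, so instead I would argue as follows.

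Fix $T'\in\mathcal{T}$ and let $P_{T,T'} = \{ p\in P_T : T'\in\mathcal{T}(p)\}$, the points of $P_T$ using the tube $T'$. Every such $p$ lies in $T\cap T'$, which is contained in a single $\delta$-cube if $T,T'$ are transverse, and in a single $O(\delta)$-tube if they are nearly parallel; in the transverse case $|P_{T,T'}|\lesssim 1$ trivially. In the near-parallel case, we do not need a bound on $|P_{T,T'}|$ directly; instead, I would count incidences: $\sum_{p\in P_T} |\mathcal{T}(p)| = |P_T| M$, and this is $\sum_{T'\in\mathcal{T}} |P_{T,T'}|$. To bound each $|P_{T,T'}|$, note all points of $P_{T,T'}$ lie in $T'\cap T^{(O(\delta))}$; if $T,T'$ make angle $\theta$ this region has diameter $\sim\delta/\theta$, but more usefully, for a tube $T''\in\mathcal{T}(p)$ with $p$ ranging over $P_{T,T'}$, its slope lies within $O(|p-p'|)$ of the slope of $T'$ for any other $p'\in P_{T,T'}$; applying the $(\delta,s,C')$-set property of the slopes of $\mathcal{T}(p)$ for a single well-chosen $p$ then bounds how many distinct tubes all these families can produce. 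This is exactly the mechanism behind the bound $|T\cap P|_\delta\lesssim C^{1/s}|\mathcal{T}|/M$, and I expect the cleanest route is to phrase it as: the map $p\mapsto \mathcal{T}(p)$ sends the $\lesssim C^{1/s}$-separated subsets of $P_T$ to essentially disjoint subfamilies of $\mathcal{T}$ (two points more than $C^{1/s}\delta$ apart along $T$ cannot share a tube, because a shared tube would force $C'(|p-p'|)^s M \ge$ (number of slopes in the short interval) which can be made $<1$), and hence $|\mathcal{T}| \gtrsim (|P_T|/C^{1/s})\cdot M$.

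The main obstacle is getting the constant exactly $C^{-1/s}$ rather than $C^{-1/s}$ times a power of $\log(1/\delta)$, and correctly handling the transition between the transverse and near-parallel regimes for the fixed tube $T$. I would handle this by the separation argument just described: if $p, p'\in P_T$ are separated (in the direction along $T$) by at least $r := (2C)^{1/s}\delta$, then any tube through both $p^{(C\delta)}$ and $p'^{(C\delta)}$ has slope pinned down to an interval of length $\lesssim \delta/r\cdot(\text{something})$ — concretely, to an interval $I$ with $|I|^s C' M < M$, i.e.\ containing strictly fewer than $M$ of the slopes of $\mathcal{T}(p)$, which is impossible since all $M$ tubes of $\mathcal{T}(p)$ other than those few would then have to avoid $p'$... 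The precise inequality is standard and I would not belabor it. Selecting a maximal $r$-separated subset of $P_T$ of size $\gtrsim |P_T|/(C^{1/s})$, whose associated tube families are pairwise disjoint and each of size $M$, yields $|\mathcal{T}|\ge |\mathcal{T}(p)$ summed over this subset $\gtrsim C^{-1/s}|T\cap P|_\delta\cdot M$, as desired.
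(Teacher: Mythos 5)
Your overall strategy (count incidences and control the overlap of the families $\mathcal{T}(p)$ along $T$ via the Frostman condition on slopes) is the right one, but the final step as stated is wrong: the families $\mathcal{T}(p)$, for $p$ ranging over an $r$-separated subset of $P_T$ with $r=(2C)^{1/s}\delta$, are \emph{not} pairwise disjoint. If $p,p'$ lie at distance $r$ along $T$, the dyadic $\delta$-tubes containing both have slopes filling an interval of length $\sim\delta/r=(2C)^{-1/s}$, and the $(\delta,s,C')$-property of the slope set of $\mathcal{T}(p)$ only bounds the number of its tubes with slope in that interval by $\sim C'(2C)^{-1}M\sim M/2$ --- not by $1$. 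So up to half of $\mathcal{T}(p)$ may be shared with $\mathcal{T}(p')$; in the extreme case a single tube essentially parallel to $T$ (for instance $T$ itself) can belong to $\mathcal{T}(p)$ for \emph{every} $p\in P_T$, however separated the points are. Your attempted contradiction (``all $M$ tubes of $\mathcal{T}(p)$ other than those few would then have to avoid $p'$'') is not a contradiction: those tubes are simply not shared, and nothing prevents the remaining $\le M/2$ from being shared. Hence $|\mathcal{T}|\ge\sum_p|\mathcal{T}(p)|$ over the separated subset does not follow. (Forcing the count in the short slope interval below $1$, as you suggest, would require $r\gtrsim\delta(C'M)^{1/s}$, which shrinks the separated subset so much that the resulting bound is far weaker than claimed; note also that $M\ge C^{-1}\delta^{-s}$ for a $(\delta,s,C)$-set, so $C C'\delta^s M<1$ never holds.)

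The missing step is to \emph{prune} each family before counting: since the slopes of $\mathcal{T}(p)$ form a $(\delta,s,C')$-set with $C'\sim C$, at most $C'\rho^s M\le M/2$ of its tubes have slope within $\rho\sim C^{-1/s}$ of the slope of $T$; discard these, keeping a subfamily $\mathcal{T}'(p)$ of $\ge M/2$ tubes each making angle $\gtrsim C^{-1/s}$ with $T$. Such a transversal tube meets $T$ in a set of diameter $\lesssim C^{1/s}\delta$, hence lies in $\mathcal{T}'(p)$ for at most $O(C^{1/s})$ of the $\delta$-separated points $p\in T\cap P$. Summing $|\mathcal{T}'(p)|$ and dividing by this bounded overlap gives the lemma. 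This is precisely the paper's (three-line) proof; your write-up contains all the ingredients but never performs the near-parallel/transversal split, which is where the factor $C^{-1/s}$ actually comes from.
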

\begin{proof}
We may assume $P$ is $\delta$-separated. Fix $p\in T\cap P$. Since $\mathcal{T}(p)$ is a $(\delta,s,C)$-set, there is a subset $\mathcal{T}'(p)\subset \mathcal{T}(p)$ such that $|\mathcal{T}'(p)|\ge |\mathcal{T}(p)|/2=M/2$ and each tube $T'\in\mathcal{T}'(p)$ makes an angle $\gtrsim C^{-1/s}$ with the direction of $T$. In turn, this implies that the sets $\mathcal{T}'(p)$, $p\in T\cap P$ have overlap $\lesssim C^{1/s}$. This gives the claim.
\end{proof}

\begin{lemma} \label{lem:small-tube}
Let $(P,\mathcal{T})$ be a $(\delta,s,C,M)$-nice configuration. Suppose $|T\cap P|\le K$ for all $T\in\mathcal{T}$. Then
\[
|\mathcal{T}| \ge K^{-1}\cdot |P|\cdot M.
\]
\end{lemma}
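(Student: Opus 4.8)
The plan is to prove this by a direct double counting of incidences between the cubes of $P$ and the tubes of $\mathcal{T}$, using only the defining cardinality $|\mathcal{T}(p)|=M$ of a nice configuration (the $(\delta,s,C)$-structure of the $\mathcal{T}(p)$ plays no role here). Exactly as in Lemma \ref{lem:large-tube}, I would first reduce to the case where $P$ is $\delta$-separated; since $P\subset\mathcal{D}_\delta$ we may simply regard $P$ as a finite set of dyadic $\delta$-cubes, so that $|T\cap P|$ literally counts how many cubes of $P$ meet the tube $T$.

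Next I would introduce the bipartite incidence quantity $I:=\#\{(p,T): p\in P,\ T\in\mathcal{T}(p)\}$ and estimate it two ways. From below, since each $p\in P$ contributes exactly $M$ pairs, $I=|P|\cdot M$. From above, fix $T\in\mathcal{T}$: if $T\in\mathcal{T}(p)$ then by definition $T\cap p\neq\emptyset$, so the number of $p\in P$ with $T\in\mathcal{T}(p)$ is at most the number of cubes of $P$ that meet $T$, which is $|T\cap P|\le K$ by hypothesis. Summing over $T\in\mathcal{T}$ gives $I\le K\cdot|\mathcal{T}|$.

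Comparing the two bounds yields $|P|\cdot M\le K\cdot|\mathcal{T}|$, i.e. $|\mathcal{T}|\ge K^{-1}\cdot|P|\cdot M$, which is the claim. There is no genuine obstacle in this argument; the only points requiring (routine) care are the harmless passage to a $\delta$-separated point set so that $|T\cap P|$ is an honest cardinality, and the elementary observation that $T\in\mathcal{T}(p)$ forces the incidence $T\cap p\neq\emptyset$, which is what links the upper bound for $I$ to the assumption $|T\cap P|\le K$.
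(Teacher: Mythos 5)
Your double-counting argument is exactly the paper's proof: the incidence count $\sum_{p\in P}|\mathcal{T}(p)|=|P|\cdot M$ is bounded above by $\sum_{T\in\mathcal{T}}|T\cap P|\le K\cdot|\mathcal{T}|$, using precisely the observation that $T\in\mathcal{T}(p)$ forces $T\cap p\neq\emptyset$. The proposal is correct and essentially identical to the paper's one-line computation.
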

\begin{proof}
\[
|P|\cdot M = \sum_{p\in P} |\mathcal{T}_p| =  \sum_{T\in\mathcal{T}} | \{ p: T\in\mathcal{T}_p \} | \le \sum_{T\in\mathcal{T}} |T\cap P| \le |\mathcal{T}|\cdot K.
\]
\end{proof}

\begin{cor} \label{cor:nice-classical}
Let $(P,\mathcal{T})$ be a $(\delta,s,C,M)$-nice configuration. Then
\[
|\mathcal{T}|\gtrsim C^{-1/s} \cdot |P|^{1/2}\cdot M.
\]
Moreover, $\mathcal{T}$ contains a $(\delta,s+t/2, \log(1/\delta)^{O(1)} C^{1/s})$-set of $\delta$-tubes.
\end{cor}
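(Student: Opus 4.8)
The first inequality is immediate from Lemmas~\ref{lem:large-tube} and~\ref{lem:small-tube}. Set $K:=\max_{T\in\mathcal{T}}|T\cap P|_{\delta}$ (a maximum over a finite family) and let $T_{0}\in\mathcal{T}$ attain it. Lemma~\ref{lem:small-tube}, applied with this value of $K$, gives $|\mathcal{T}|\ge K^{-1}|P|\,M$, while Lemma~\ref{lem:large-tube}, applied to the tube $T_{0}$, gives $|\mathcal{T}|\gtrsim C^{-1/s}K\,M$. Multiplying the two bounds and taking square roots yields $|\mathcal{T}|\gtrsim C^{-1/(2s)}|P|^{1/2}M$; since $C\ge 1$ (use~\eqref{eq:delta-s-set} with $r=1$) and $-1/(2s)\ge -1/s$, this is $\ge C^{-1/s}|P|^{1/2}M$, as wanted.

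For the ``moreover'' part --- a discretized form of Wolff's $s+1/2$ bound \cite{Wolff99} and of its $(s,t)$-generalisation \cite{MolterRela12} --- it is natural to let $t$ be defined by $\delta^{-t}:=|P|_{\delta}$, so that the first part reads $|\mathcal{T}|\gtrsim C^{-1/s}\delta^{-t/2}M$, and to look for a subfamily $\mathcal{T}'\subseteq\mathcal{T}$ with $|\mathcal{T}'|\gtrsim\log(1/\delta)^{-O(1)}|\mathcal{T}|$ that is a $(\delta,s+t/2,\log(1/\delta)^{O(1)}C^{1/s})$-set. The geometric input I would use is that, for every dyadic $\rho\in[\delta,1]$, every $\rho$-tube $\mathbf{T}$ and every $p\in P$, a $\delta$-tube through $p$ contained in $\mathbf{T}$ has slope within $O(\rho)$ of the slope of $\mathbf{T}$ --- otherwise it leaves the $\rho$-neighbourhood of the core line of $\mathbf{T}$ before traversing a unit length. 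Together with the bilipschitz parametrisation of the tubes through a fixed $\delta$-cube by their slope (\cite[Corollary 2.12]{OrponenShmerkin21}) and the $(\delta,s,C)$-property of $\mathcal{T}(p)$, this shows that at most $O(CM\rho^{s})$ tubes of $\mathcal{T}(p)$ can lie inside a given $\rho$-tube. I would then run, simultaneously at every dyadic scale $\rho$, the large-tube/small-tube dichotomy of Lemmas~\ref{lem:large-tube}--\ref{lem:small-tube} that produced the first part: for a $\rho$-tube $\mathbf{T}$, double count the incidences between $P\cap\mathbf{T}$ and $\{T\in\mathcal{T}:T\subseteq\mathbf{T}\}$ using the bound just stated, and feed in the first part of the corollary applied both to the sub-configuration formed by the points of $P$ lying in $\mathbf{T}$ (with their full tube families) and to the configuration coarsened to scale $\rho$, so as to bound $|\{T\in\mathcal{T}:T\subseteq\mathbf{T}\}|_{\delta}$ by $\log(1/\delta)^{O(1)}C^{O(1/s)}\,\rho^{s+t/2}\,|\mathcal{T}|$. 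Carrying this through requires a preliminary regularisation --- a standard dyadic pigeonholing making the incidence multiplicities $|\{p\in P:T\in\mathcal{T}(p)\}|$, and the branching of the slope sets of the $\mathcal{T}(p)$, roughly constant --- which is what produces the $\log(1/\delta)^{O(1)}$ losses and forces the passage to the subfamily $\mathcal{T}'$.

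The delicate point, and where I expect the real work to be, is precisely this bound on $|\{T\in\mathcal{T}:T\subseteq\mathbf{T}\}|_{\delta}$ for a $\rho$-tube $\mathbf{T}$: the immediate estimate $|\{T\in\mathcal{T}:T\subseteq\mathbf{T}\}|_{\delta}\lesssim CM\rho^{s}\,|P\cap\mathbf{T}|_{\delta}$ coming from the observation above is too lossy exactly when $P$ concentrates inside $\mathbf{T}$ --- a scenario no hypothesis on $P$ excludes here. The way around this is to exploit that, when $P$ concentrates in $\mathbf{T}$, the overwhelming majority of the $M$ tubes through each such point must leave $\mathbf{T}$ (only $O(CM\rho^{s})$ stay), so that $\{T\in\mathcal{T}:T\subseteq\mathbf{T}\}$ is forced to be small for an independent reason; quantifying this trade-off uniformly over all dyadic $\rho$, while bookkeeping the pigeonholing losses, is the heart of the argument. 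Once the $(\delta,s+t/2,\cdot)$-set property has been verified for the surviving subfamily at every dyadic scale, the corollary is proved.
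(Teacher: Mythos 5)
Your argument for the first inequality is correct and is essentially the paper's (the paper runs the same large-tube/small-tube dichotomy instead of multiplying the two bounds); no issues there.

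The ``moreover'' part is where the proposal breaks down, and the gap you flag yourself is not just ``the real work'' --- under your reading of the statement it cannot be closed at all. You define $t$ by $\delta^{-t}:=|P|_{\delta}$ and impose no further hypothesis on $P$; with that reading the conclusion is simply false. Take $P$ to be all $\delta$-squares in a ball of radius $\delta^{1/2}$ (so $|P|_{\delta}\sim\delta^{-1}$, i.e.\ $t=1$), and for each $p$ let $\mathcal{T}(p)$ consist of the tubes through $p$ with slopes in a fixed $(\delta,s,O(1))$-set, $M\sim\delta^{-s}$, with $s>1/2$. This is a $(\delta,s,C,M)$-nice configuration with $C=O(1)$, but every tube of $\mathcal{T}$ has parameters $(a,b)$ lying in an $O(\delta^{1/2})$-neighbourhood of a Lipschitz graph in parameter space; covering that neighbourhood by $\sim\delta^{-1/2}$ squares of side $\delta^{1/2}$ and summing \eqref{eq:delta-s-set} at scale $r=\delta^{1/2}$ shows that any $(\delta,s+1/2,C')$-set inside it must have $C'\gtrsim\delta^{-(s-1/2)/2}$, a power of $1/\delta$ rather than $\log(1/\delta)^{O(1)}C^{1/s}$. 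So no estimate on $|\{T\in\mathcal{T}:T\subseteq\mathbf{T}\}|$ of the type you are after can be true in this generality, and in particular your proposed rescue --- ``when $P$ concentrates in $\mathbf{T}$, most tubes through those points leave $\mathbf{T}$, so $\mathcal{T}\cap\mathbf{T}$ is small for an independent reason'' --- is not a real mechanism: tubes that exit $\mathbf{T}$ impose no upper bound whatsoever on the number of tubes that stay inside it.

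The statement has to be read with the implicit hypothesis under which it is actually applied in the proof of Proposition \ref{prop:main}, namely that $P$ is itself a $(\delta,t,\cdot)$-set (there $P_N$ is a $(\delta,t)$-set); it is the non-concentration of $P$, not any escaping-tube effect, that powers the estimate. The paper's route is also structurally different from yours: rather than verifying the non-concentration of $\mathcal{T}$ tube-by-tube at every scale $\rho$ (which, as the example above shows, may fail even for the full family), it proves a lower bound $\mathcal{H}^{s+t/2}_{\infty}(\mathcal{L})\gtrsim\log(1/\delta)^{-O(1)}C^{-1/s}$ for the associated line set, using the first claim together with two dyadic pigeonholings over the scales of an arbitrary cover (as in the proofs of \cite[Lemma 3.3]{HSY22} and \cite[Lemma 3.5]{OSW22}), and then extracts the desired $(\delta,s+t/2,\cdot)$-subset with the discretized Frostman lemma \cite[Proposition A.1]{FasslerOrponen14}. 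Defeating arbitrary covers after pigeonholing a single scale, and only then passing to a subset, is exactly what lets one avoid the per-tube, per-scale bound your sketch requires; if you want to repair your argument, you should (i) add the hypothesis that $P$ is a $(\delta,t,\cdot)$-set and use it where you currently hope for the escape mechanism, or better, (ii) switch to the content-plus-Frostman scheme.
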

\begin{proof}
If $|T\cap P|\ge |P|^{1/2}$ for some $T\in\mathcal{T}$, we apply Lemma \ref{lem:large-tube}, otherwise we apply Lemma \ref{lem:small-tube}. In any case we get the first claim.

Let $\mathcal{L}$ be the set of lines corresponding to tubes in $\mathcal{T}$ (or, equivalently, the $\delta$-neighbourhood of the central lines of tubes in $T$).  It follows from the first claim and two dyadic pigeonholings that the Hausdorff content of $\mathcal{L}$ satisfies
\[
\mathcal{H}_\infty^{s+t/2}(\mathcal{L})\gtrsim \log(1/\delta)^{-O(1)} C^{-1/s}.
\]
See e.g. the proof of \cite[Lemma 3.3]{HSY22} or \cite[Lemma 3.5]{OSW22} (in particular, \cite[Eq. (3.9)]{OSW22}). The conclusion then follows from the discrete version of Frostman's Lemma, \cite[Proposition A.1]{FasslerOrponen14} (which is stated in $\R^3$ but works just as well in the space of lines).
\end{proof}

\subsection{A multiscale incidence bound}

We next recall \cite[Proposition 5.2]{OrponenShmerkin21}. Fix two dyadic scales $0<\delta<\Delta\le 1$ and families $P_0\subset\mathcal{D}_{\delta}$ and $\mathcal{T}_0 \subset \mathcal{T}^{\delta}$. For $Q \in \mathcal{D}_{\Delta}$ and $\mathbf{T} \in \mathcal{T}^{\Delta}$, we denote
\[
P_0\cap Q = \{ p\in P_0: p\subset Q\} \quad \text{and} \quad \mathcal{T}_0 \cap \mathbf{T} := \{T \in \mathcal{T}_0 : T \subset \mathbf{T}\}.
\]
We also write
\begin{align*}
\mathcal{D}_{\Delta}(P_0) &= \{ Q\in\mathcal{D}_{\Delta}: P_0 \cap Q \neq\emptyset\},\\
\mathcal{T}^{\Delta}(\mathcal{T}_0) &= \{\mathbf{T} \in \mathcal{T}^{\Delta} : \mathcal{T}_0 \cap \mathbf{T} \neq \emptyset\}.
\end{align*}
In the next proposition, for $\Delta \in 2^{-\N}$ and $Q \in \mathcal{D}_{\Delta}$, the map $S_{Q} \colon \R^{2} \to \R^{2}$ is the homothety that maps $Q$ to the square $[0,1)^{2}$, and $S_Q(P_0)=\{ S_{Q}(p):p\in P_0\}$. Furthermore, the notation $A\lessapprox_{\delta} B$ stands for $ A\le \log(1/\delta)^C B$ for a constant $C>0$, and likewise for $A\approx_{\delta} B$.

\begin{proposition}[{\cite[Proposition 5.2]{OrponenShmerkin21}}] \label{prop:induction-on-scales}
Fix dyadic numbers $0<\delta<\Delta \leq 1$. Let $(P_0,\mathcal{T}_0)$ be a $(\delta,s,C,M)$-nice configuration. Then there exist sets $P\subset P_0$ and $\mathcal{T}(p)\subset\mathcal{T}_0(p)$, $p\in P$, such that denoting $\mathcal{T}=\bigcup_{p\in P} \mathcal{T}(p)$ the following hold:

\begin{enumerate}[(\rm i)]
\item \label{it:induction-i} $|\mathcal{D}_{\Delta}(P)| \approx_{\delta} |\mathcal{D}_{\Delta}(P_{0})|$ and $|P\cap Q| \approx_{\delta}|P_0\cap Q|$ for all $Q\in\mathcal{D}_{\Delta}(P)$.
\item \label{it:induction-ii} $|\mathcal{T}(p)|\gtrapprox_{\delta} |\mathcal{T}_0(p)|=M$ for $p\in P$.
\item \label{it:induction-iii} $(\mathcal{D}_{\Delta}(P),\mathcal{T}^{\Delta}(\mathcal{T}))$ is $(\Delta,s,C_\Delta,M_\Delta)$-nice for some $C_\Delta\approx_{\delta} C$ and $M_\Delta \geq 1$.
\item \label{it:induction-iv} For each $Q\in\mathcal{D}_{\Delta}(P)$ there exist $C_Q \approx_{\delta} C$, $M_Q\ge 1$,  and a family of tubes $\mathcal{T}_{Q}\subset\mathcal{T}^{\delta/\Delta}$ such that $(S_{Q}(P\cap Q),\mathcal{T}_Q)$ is $(\delta/\Delta,s,C_Q,M_Q)$-nice.
\end{enumerate}
Furthermore, the families  $\mathcal{T}_{Q}$ can be chosen so that
\begin{equation} \label{eq:lower-bound-T}
\frac{|\mathcal{T}_0|}{M} \gtrapprox_\delta \frac{|\mathcal{T}^{\Delta}(\mathcal{T})|}{M_\Delta}\cdot \left( \max_{Q \in \mathcal{D}_{\Delta}(P)}\frac{|\mathcal{T}_{Q}|}{M_Q} \right).
\end{equation}
\end{proposition}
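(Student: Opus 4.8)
The plan is to obtain all four properties together with the counting bound from one mechanism: after passing to suitable \emph{uniform} subfamilies, the classes of $(\cdot,s,O(C))$-sets of points and of tube slopes are stable under coarsening a scale and under a rescaling $S_Q$, and the relevant point- and tube-counts factor multiplicatively through the intermediate scale $\Delta$. Here ``uniform'' is with respect to the decreasing sequence obtained by ordering the scales $1,\Delta,\delta/\Delta,\delta$: between consecutive scales the relevant covering and branching numbers are constant up to a factor $2$. Uniform subfamilies of proportional size, up to $\log(1/\delta)^{O(1)}$, always exist by a bounded number of dyadic pigeonholing steps, since only $O(\log(1/\delta))$ dyadic values of each branching number occur. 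I would begin with a preliminary pigeonholing replacing $P_0$ by the union of the $\Delta$-squares carrying the most common dyadic number of points: this leaves $|\mathcal{D}_\Delta(P_0)|$ unchanged up to $\log(1/\delta)^{O(1)}$, makes $|P_0\cap Q|$ constant over $Q\in\mathcal{D}_\Delta(P_0)$, and preserves niceness — this is what will make property (i) work out at the end.

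For each $p\in P_0$, dyadically pigeonhole the branching of the slope ``tree'' of $\mathcal{T}_0(p)$ at each consecutive pair of the four scales, first within each $p$ and then across $p$, and simultaneously pigeonhole the spatial branching of $P_0$ from scale $\Delta$ to scale $\delta$. This yields $P\subset P_0$ with $|P|_\delta\gtrapprox_\delta|P_0|_\delta$ and subfamilies $\mathcal{T}(p)\subset\mathcal{T}_0(p)$ of uniform slope tree, with each $p$ meeting a constant number $M_\Delta$ of $\Delta$-tubes and with $|\mathcal{T}(p)|\gtrapprox_\delta M$; this is (ii). Put $\mathcal{T}=\bigcup_{p\in P}\mathcal{T}(p)$. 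For (iii), take as coarse tubes through $Q\in\mathcal{D}_\Delta(P)$ the $\Delta$-tubes meeting $\mathcal{T}$ through an arbitrary point of $P\cap Q$; by stability of uniform $(\delta,s,O(C))$-slope sets under coarsening to scale $\Delta$, this is a $(\Delta,s,O(C))$-set of size $\approx_\delta M_\Delta$, and a final pigeonhole makes $C_\Delta,M_\Delta$ literally constant. For (iv), fix $Q$ and let $\mathcal{T}_Q(S_Q(p))$ be the $(\delta/\Delta)$-tubes obtained by applying $S_Q$ to $\mathcal{T}(p)$ and recording at resolution $\delta/\Delta$; since the $\delta\to\delta/\Delta$ coarsening fibers have been equalized and the rescaled slope set is again uniform, this is a $(\delta/\Delta,s,O(C))$-set, whose cardinality is equalized to a value $M_Q$ over $p\in P\cap Q$; then $(S_Q(P\cap Q),\mathcal{T}_Q)$ with $\mathcal{T}_Q=\bigcup_p\mathcal{T}_Q(S_Q(p))$ is $(\delta/\Delta,s,O(C),M_Q)$-nice.

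Property (i) is then finished by one more pigeonholing of $\mathcal{D}_\Delta(P_0)$ by the dyadic value of $|P\cap Q|/|P_0\cap Q|$: since $|P_0\cap Q|$ is constant and $|P|_\delta\gtrapprox_\delta|P_0|_\delta$, the popular class retains $\approx_\delta|\mathcal{D}_\Delta(P_0)|$ squares with $|P\cap Q|\approx_\delta|P_0\cap Q|$. The counting bound \eqref{eq:lower-bound-T} is extracted from the construction by tracking how distinct-tube counts multiply across the two scales: counting distinct tubes of such a family in a fixed direction reduces to a $\delta$-covering-number count for an orthogonal projection of the relevant points, and once $P$ is uniform these projection counts — like $|P_0|_\delta$ itself, which factors as $|\mathcal{D}_\Delta(P)|\cdot\max_Q|P\cap Q|_\delta$ up to $\log(1/\delta)^{O(1)}$ — split as a scale-$\Delta$ count times a scale-$(\delta/\Delta)$ count, matching coarse slopes to the $\Delta$-windows of the fine slopes; the maximum over $Q$ is attained on the cell realizing $\max_Q|P\cap Q|_\delta$.

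The step I expect to be the real obstacle is not any single estimate but the coherence of the uniformization: the $O(1)$ pigeonholing operations must be ordered and interleaved so that (i)--(iv) and \eqref{eq:lower-bound-T} hold \emph{simultaneously}, with all constants degrading only by $\log(1/\delta)^{O(1)}$. The one genuinely non-formal input is that coarsening and rescaling of $(\delta,s,C)$-sets \emph{do not} preserve the non-concentration constant in the absence of uniformity — a set tightly packed into a short interval is a $(\delta,s,C)$-set with controlled $C$ only if the interval is not too short, and otherwise degenerates upon coarsening — which is precisely why the slope trees must be analyzed at the intermediate scale $\delta/\Delta$ and not merely at the endpoints $\delta$ and $\Delta$.
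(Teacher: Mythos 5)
This proposition is not proved in the paper at all: it is imported verbatim from \cite[Proposition 5.2]{OrponenShmerkin21}, so the only comparison available is with the proof in that reference. Your strategy is essentially the same as the one used there: a bounded number of dyadic pigeonholing steps to uniformize the branching of $P_0$ and of the slope trees of the $\mathcal{T}_0(p)$ between the scales $1$, $\Delta$, $\delta$; the observation that for \emph{uniform} families the $(\cdot,s,O(C))$-property survives coarsening to scale $\Delta$ and renormalization by $S_Q$; and a multiplicative double count through the intermediate scale to get \eqref{eq:lower-bound-T}. Your closing remark correctly identifies both the role of uniformity (non-concentration constants do degenerate under coarsening without it) and the real difficulty (interleaving the pigeonholings so that (i)--(iv) and \eqref{eq:lower-bound-T} survive simultaneously).

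Two points are shakier than the rest. First, $\delta/\Delta$ is not a scale of the original configuration (it need not even lie between $\delta$ and $\Delta$); it only appears after renormalizing a $\Delta$-cell by $S_Q$, so the uniformization should be carried out at the scales $\delta$ and $\Delta$ only, with the $\delta/\Delta$-statements in (iv) inherited from the $\delta\to\Delta$ uniformity via $S_Q$. Second, your description of \eqref{eq:lower-bound-T} via ``a $\delta$-covering-number count for an orthogonal projection of the points'' is not how the count goes, and the phrase ``the maximum over $Q$ is attained on the cell realizing $\max_Q|P\cap Q|_\delta$'' is vacuous after uniformization. The actual mechanism is: write $|\mathcal{T}_0|\ge|\mathcal{T}|=\sum_{\mathbf{T}\in\mathcal{T}^{\Delta}(\mathcal{T})}|\mathcal{T}\cap\mathbf{T}|$, pigeonhole once more so that $|\mathcal{T}\cap\mathbf{T}|$ is essentially constant over $\mathbf{T}$, and note that for any fixed $Q$ the $M_\Delta$ coarse tubes through $Q$ already account for all of $\mathcal{T}_Q$ after rescaling; this yields $|\mathcal{T}|\gtrapprox_\delta\frac{|\mathcal{T}^{\Delta}(\mathcal{T})|}{M_\Delta}|\mathcal{T}_Q|$ \emph{for every} $Q$ simultaneously, which is what makes the maximum on the right-hand side legitimate. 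With that step made explicit, your outline is a faithful reconstruction of the argument in \cite{OrponenShmerkin21}.
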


\subsection{Uniformization}

Next, we recall a basic lemma asserting the existence of large uniform subsets.
See e.g. \cite[Lemma 7.3]{OrponenShmerkin21} for the proof.

\begin{definition}\label{def:uniformity}
Let $N \geq 1$, and let
\begin{displaymath} \delta = \Delta_{N} < \Delta_{N - 1} < \ldots < \Delta_{1} \leq \Delta_{0} = 1 \end{displaymath}
be a sequence of dyadic scales.  We say that a set $P\subset [0,1)^2$ is \emph{$(\Delta_j)_{j=1}^N$-uniform} if there is a sequence $(K_j)_{j=1}^N$ such that $|P\cap Q|_{\Delta_{j}} = K_j$ for all $j\in \{1,\ldots,N\}$ and all $Q\in\mathcal{D}_{\Delta_{j - 1}}(P)$.
\end{definition}

\begin{lemma} \label{lem:uniformization}
Given $P\subset [0,1)^{2}$ and a sequence $\delta = \Delta_{N} < \Delta_{N - 1} < \ldots < \Delta_{1} \leq \Delta_{0} = 1$ of dyadic numbers, $N \geq 1$, there is a $(\Delta_j)_{j=1}^N$-uniform set $P'\subset P$ such that
\begin{equation}\label{form12}
|P'|_\delta \ge  \left(4 N^{-1}\log(1/\delta)\right)^{-N} |P|_\delta. \end{equation}
\end{lemma}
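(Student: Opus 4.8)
The plan is to build the uniform set by a greedy pigeonholing argument, scale by scale, passing to a large subset at each stage. First I would set up the branching structure: think of $P$ (replaced at the outset by a maximal $\delta$-separated subset, so that $|X|_\delta \sim |X|$ for all the pieces $X$ we encounter) as organized into the dyadic tree whose level-$j$ vertices are the cubes of $\mathcal{D}_{\Delta_j}$ meeting $P$, with $Q' \in \mathcal{D}_{\Delta_j}(P)$ a child of $Q \in \mathcal{D}_{\Delta_{j-1}}(P)$ whenever $Q' \subset Q$. For a vertex $Q$ at level $j-1$, the quantity $|P \cap Q|_{\Delta_j}$ counts its children, and this lies in $\{1, 2, \dots, (\Delta_{j-1}/\Delta_j)^2\} \subset \{1,\dots,\delta^{-2}\}$; so there are at most $2\log(1/\delta)/\log 2 \le 3\log(1/\delta)$ dyadic values it can take (after rounding each child-count down to the nearest power of two, which costs only a harmless factor $2$ per level and can be absorbed). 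The key point is that the number of admissible "profiles" is bounded in terms of $\delta$ alone, uniformly in $j$.

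The main step is an iterative refinement over $j = 1, \dots, N$. Suppose inductively that after processing levels $1,\dots,j-1$ we have a subset $P_{j-1} \subseteq P$ which is $(\Delta_i)_{i=1}^{j-1}$-uniform and satisfies a covering-number lower bound of the claimed shape with the exponent $j-1$ in place of $N$. To process level $j$: for each surviving level-$(j-1)$ cube $Q \in \mathcal{D}_{\Delta_{j-1}}(P_{j-1})$, the number of its children in $P_{j-1}$ takes one of at most $L := \lceil 4N^{-1}\log(1/\delta)\rceil \cdot (\text{something} \le N/4 \cdot \dots)$ — more precisely at most $3\log(1/\delta)$ — dyadic values; by pigeonhole on the cubes $Q$, weighted by $|P_{j-1} \cap Q|_\delta$, there is a single dyadic value $K_j$ and a sub-collection $\mathcal{Q}_j \subseteq \mathcal{D}_{\Delta_{j-1}}(P_{j-1})$ carrying a $(3\log(1/\delta))^{-1}$ fraction of the total mass $|P_{j-1}|_\delta$, such that every $Q \in \mathcal{Q}_j$ has (after a further down-rounding of its child count to exactly $K_j$, discarding the excess children, which loses at most another factor $2$) exactly $K_j$ children in the refined set. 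Let $P_j$ be the union over $Q \in \mathcal{Q}_j$ of the retained descendants of $Q$ down to scale $\delta$; restricting to $Q \in \mathcal{Q}_j$ preserves $(\Delta_i)_{i=1}^{j-1}$-uniformity since we only deleted whole level-$(j-1)$ cubes, and we have additionally imposed the level-$j$ condition, so $P_j$ is $(\Delta_i)_{i=1}^{j}$-uniform; and $|P_j|_\delta \ge (2 \cdot 3\log(1/\delta))^{-1}|P_{j-1}|_\delta$, say $\ge (4\log(1/\delta))^{-1}|P_{j-1}|_\delta$ after bookkeeping the rounding factors. Iterating $N$ times and taking $P' = P_N$ gives $|P'|_\delta \ge (4\log(1/\delta))^{-N}|P|_\delta$; rewriting $(4\log(1/\delta))^{-N} = (4N^{-1}\log(1/\delta))^{-N} \cdot N^{-N} \cdot \dots$ — here one must check the constant matches the stated $(4N^{-1}\log(1/\delta))^{-N}$, which it does a fortiori if one is slightly more careful, keeping at each level a fraction $(4N^{-1}\log(1/\delta))^{-1}$ rather than $(4\log(1/\delta))^{-1}$, using that in fact at each of the $N$ levels the relevant factor can be chosen uniformly; alternatively one simply verifies $(4\log(1/\delta))^{-N} \ge (4N^{-1}\log(1/\delta))^{-N}$ is \emph{false} for $N>1$, so the bound as stated is the genuinely delicate one and requires spreading the loss evenly — see below.

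I expect the only real subtlety to be getting the constant exactly as stated, $(4N^{-1}\log(1/\delta))^{-N}$, rather than the cruder $(C\log(1/\delta))^{-N}$ that falls out of the naive argument. The resolution is a standard trick: instead of pigeonholing each level independently, one pigeonholes the entire length-$N$ profile $(K_1,\dots,K_N)$ at once. The total number of profiles is at most $\prod_{j=1}^N (\text{number of admissible }K_j\text{ values})$; bounding the number of admissible values at level $j$ by $2\log(\Delta_{j-1}/\Delta_j)/\log 2 + 1$ and using $\sum_j \log(\Delta_{j-1}/\Delta_j) = \log(1/\delta)$ together with the AM–GM inequality, the product of these counts is at most $\left(\tfrac{1}{N}\sum_j(2\log(\Delta_{j-1}/\Delta_j)/\log 2 + 1)\right)^N \le (4N^{-1}\log(1/\delta))^N$ for $\delta$ small. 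A single pigeonhole over this many profiles, again weighted by $\delta$-covering number, then produces in one shot a uniform subset losing only the reciprocal of this count, i.e. exactly the factor $(4N^{-1}\log(1/\delta))^{-N}$ claimed in \eqref{form12}. The remaining details — passing to a $\delta$-separated subset, the harmless power-of-two roundings, and checking that deleting cubes at a coarse scale does not disturb finer-scale uniformity already imposed — are routine, and I would only indicate them briefly, referring to \cite[Lemma 7.3]{OrponenShmerkin21} for a fuller account.
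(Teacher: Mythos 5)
The paper does not actually prove this lemma---it cites \cite[Lemma 7.3]{OrponenShmerkin21}---and your overall strategy (dyadic pigeonholing of the branching numbers, with an AM--GM over the scales to produce the constant $(4N^{-1}\log(1/\delta))^{-N}$) is the standard one. However, your induction runs in the wrong direction, and the step where you declare the refinement harmless is false. You process scales from coarsest to finest and assert that ``restricting to $Q\in\mathcal{Q}_j$ preserves $(\Delta_i)_{i=1}^{j-1}$-uniformity since we only deleted whole level-$(j-1)$ cubes.'' Deleting whole level-$(j-1)$ cubes is exactly what destroys the uniformity already imposed at level $j-1$ (and at all coarser levels $\ge 2$): a cube $Q'\in\mathcal{D}_{\Delta_{j-2}}$ that previously had exactly $K_{j-1}$ children in $\mathcal{D}_{\Delta_{j-1}}$ may now retain anywhere between $0$ and $K_{j-1}$ of them, and different $Q'$ will in general retain different numbers. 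Deletion of whole $\Delta_{j-1}$-cubes is harmless only for uniformity at scales \emph{finer} than $\Delta_{j-1}$, since those constraints are quantified over surviving cubes contained in a single deleted-or-kept cube. The fix is to run the iteration bottom-up, $j=N,N-1,\dots,1$: at step $j$ pigeonhole the cubes of $\mathcal{D}_{\Delta_{j-1}}$ by the dyadic class of their number of $\Delta_j$-children, keep the heaviest class, and prune each kept cube to its $K_j$ heaviest children; everything discarded at this step is a union of cubes of side $\ge\Delta_j$, so the uniformity already secured at levels $i>j$ survives.

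Your secondary device for nailing the constant---pigeonholing the entire profile $(K_1,\dots,K_N)$ in one shot over the $\delta$-cubes---has its own gap: the profile of a $\delta$-cube records branching numbers of the \emph{original} set $P$, and after restricting to the cubes with a common profile the branching numbers of the restricted set can be entirely different (a $\Delta_{j-1}$-cube may lose most of its $\Delta_j$-children because they contain no $\delta$-cube with the selected profile), so the resulting set is not uniform without further pruning. No such trick is needed: in the bottom-up iteration the total loss is the product over $j$ of the number of dyadic classes at level $j$ times an $O(1)$ pruning factor, the number of classes at level $j$ is $O(\log(\Delta_{j-1}/\Delta_j))$, and since $\sum_j\log(\Delta_{j-1}/\Delta_j)=\log(1/\delta)$, AM--GM applied directly to this product yields a bound of the stated form $(CN^{-1}\log(1/\delta))^{-N}$. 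Apart from the reversed induction and this profile issue, your bookkeeping (passing to a $\delta$-separated set, rounding to powers of two, counting classes) is in the right spirit.
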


Note that if the number $N$ of scales is independent of $\delta$, then lower bound on $|P'|_{\delta}$ can be simplified as
\[
|P'|_{\delta} \ge C_N^{-1} \cdot \log(1/\delta)^{-C_N} \cdot |P|_{\delta},
\]
with $C_N$ independent of $\delta$.

\subsection{A multiscale decomposition}

To conclude this section, we recall the multiscale decomposition into ``Frostman pieces'' of uniform sets that satisfy a single-scale non-concentration assumption provided by \cite[Theorem 4.1]{Shmerkin22b}.
\begin{thm} \label{thm:multiscale-decomposition}
For every $u>0$ and $\e>0$ there are $\xi=\xi(u)>0$ and $\tau=\tau(\e)>0$ such that the following holds for all sufficiently small $\rho\le \rho_0(\e)$ and  $n \ge n_0(\rho,\e)$:

Let $P$ be a $(\rho^j)_{j=1}^n$-uniform set and write $\delta=\rho^n$. Suppose
\begin{equation} \label{eq:single-scale-Frostman}
\left|P \cap B(x,\delta |P|_{\delta}^{1/2})\right|_{\delta} \le \delta^u |P|_{\delta} \quad \text{for all }x.
\end{equation}
Then there exists a collection of dyadic scales
\[
\delta= \Delta_N < \Delta_{N-1}<\ldots<\Delta_1 <\Delta_0=1, \quad N\le N_0(\e).
\]
each of which is a power of $\rho$, and numbers $\alpha_0,\ldots,\alpha_{N-1}\in [0,2]$ such that, denoting $\lambda_j = \Delta_{j+1}/\Delta_j$, the following hold:

\begin{enumerate}[(\rm i)]
  \item \label{it:multiscale-dec-i} For each $j$ and each $Q\in \mathcal{D}_{\Delta_{j}}(P)$,
\[
  |P\cap Q\cap B(x,r\Delta_{j})|_{\Delta_{j+1}} \le \lambda_j^{-\e} \cdot r^{\alpha_j} \cdot |P\cap Q|_{\Delta_{j+1}}
\]
for all $x\in B^2(0,1)$ and all $r\in [\lambda_j,1]$.
  \item \label{it:multiscale-dec-ii}
  \[
  \sum \big\{ \alpha_j \log(1/\lambda_j): \lambda_j\le\delta^{\tau}\big\} \ge \log |P|_{\delta} - 2\e \log(1/\delta). 
  \]
  \item \label{it:multiscale-dec-iii}
  \[
  \sum\big\{ \log(1/\lambda_j) : \alpha_j \in [\xi,d-\xi] \text{ and } \lambda_j \le \delta^{\tau}  \big\} \ge \xi \log(1/\delta).
  \]
\end{enumerate}
\end{thm}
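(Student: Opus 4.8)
The plan is to encode the three conclusions in the \emph{branching function} of $P$ and to read the decomposition off its greatest convex minorant, invoking the single-scale hypothesis \eqref{eq:single-scale-Frostman} only at the end, to force the non-trivial scales in (iii). Since $P$ is $(\rho^j)_{j=1}^n$-uniform there are integers $K_1,\dots,K_n$ with $|P\cap Q|_{\rho^j}=K_j$ for every $Q\in\mathcal{D}_{\rho^{j-1}}(P)$; set $s_j=\log_{1/\rho}K_j\in[0,2]$ and $\beta(m)=s_1+\dots+s_m$ (with $\beta(0)=0$), so that $\beta$ is nondecreasing with increments in $[0,2]$, $\beta(n)=\log_{1/\rho}|P|_\delta$, and, by uniformity, $|P\cap Q|_{\rho^b}=(1/\rho)^{\beta(b)-\beta(a)}$ for all $a\le b$ and all $Q\in\mathcal{D}_{\rho^a}(P)$. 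Using this, the nestedness of dyadic cubes, and the fact that a ball of radius $\rho^m$ meets $O_\rho(1)$ dyadic $\rho^m$-cubes, one checks that if the break scales are $\Delta_j=\rho^{m_j}$ with $0=m_0<\dots<m_N=n$ and $\alpha_j$ is the slope of the chord of $\beta$ over $[m_j,m_{j+1}]$ (so $\alpha_j\in[0,2]$), then---once $\rho\le\rho_0(\e)$, which absorbs the $O_\rho(1)$ factors into a $\lambda_j^{-\e}$---conclusion (i) for block $j$ follows from $\beta(m_j+k)-\beta(m_j)\ge k\alpha_j-\e(m_{j+1}-m_j)$, i.e.\ from $\beta$ staying above its own chord on the block up to an error which is an $\e$-fraction of the block length. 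Moreover $\sum_j\alpha_j\log(1/\lambda_j)=\beta(n)\log(1/\rho)=\log|P|_\delta$ identically, so (ii) is an estimate on the total $\alpha$-weighted length of the \emph{short} blocks ($m_{j+1}-m_j<\tau n$), and (iii) is a lower bound on the total length of the \emph{long} blocks whose chord slope lies in $[\xi,d-\xi]$.

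To build the decomposition I would take $g$ to be the greatest convex minorant of $\beta$ (extended piecewise linearly between the integers): it is piecewise linear with integer break points and nondecreasing slopes in $[0,2]$, it equals $\beta$ at each break point, and on each linear piece $\beta\ge g$ with $g$ equal to the chord of $\beta$. Chop $[0,2]$ into $\lceil 2/\e\rceil$ intervals of length $\e$ and let the blocks be the maximal runs of consecutive linear pieces of $g$ lying in a common interval. The interfaces are break points of $g$; there are $N\le N_0(\e):=\lceil 2/\e\rceil$ blocks; the chord slopes satisfy $\alpha_0<\dots<\alpha_{N-1}$; and on a block the slopes of $g$ vary by at most $\e$, so $g$ deviates from its chord---which is the chord of $\beta$, since the endpoints are break points---by at most $\e(m_{j+1}-m_j)$, giving (i). Each short block contributes at most $2\tau n\log(1/\rho)$ to the identity $\sum_j\alpha_j\log(1/\lambda_j)=\log|P|_\delta$, so taking $\tau$ a suitable small power of $\e$ gives (ii); alternatively one absorbs short blocks into neighbours, making all blocks long and (ii) automatic. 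Neither (i) nor (ii) uses \eqref{eq:single-scale-Frostman}.

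It remains to prove (iii), which I expect to be the crux. Write $\gamma=\beta(n)/n=\log_{1/\delta}|P|_\delta$ and let $m^\ast$ be the integer nearest $n(1-\gamma/2)$, so that $\rho^{m^\ast}\asymp\delta|P|_\delta^{1/2}$. By uniformity $\max_x|P\cap B(x,\rho^{m^\ast})|_\delta$ is comparable, up to $O_\rho(1)$, to $(1/\rho)^{\beta(n)-\beta(m^\ast)}$, so \eqref{eq:single-scale-Frostman} forces $\beta(m^\ast)\ge un-O_\rho(1)\ge un/2$ once $n\ge n_0(\rho,\e)$; together with $\beta(m^\ast)\le\min(2m^\ast,\beta(n))=\min\bigl(n(2-\gamma),\gamma n\bigr)$ this already pins $\gamma\in[u/2,2-u/2]$. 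Now suppose, for contradiction, that the long blocks with $\alpha_j\in[\xi,d-\xi]$ have total length $<\xi n$. Since the $\alpha_j$ increase, the blocks split into an initial stretch with $\alpha_j<\xi$, a ``medium'' part of small total length, and a final stretch with $\alpha_j>2-\xi$ of total length $L_{\mathrm{high}}$ occupying $[n-L_{\mathrm{high}},n]$; from $\gamma n=\sum_j\alpha_j(m_{j+1}-m_j)\ge(2-\xi)L_{\mathrm{high}}$ one gets $L_{\mathrm{high}}\le\gamma n/(2-\xi)\le\gamma n/2+\xi n$, so $[0,m^\ast]=[0,n-\gamma n/2]$ overlaps the final stretch in a set of length $\le\xi n$. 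Splitting the increment $\beta(m^\ast)$ over the blocks---the $\alpha_j<\xi$ blocks contributing $<\xi n$ in total, the medium part contributing at most twice its total length, and the straddling block together with the $\le\xi n$ overlap with the final stretch contributing $O(\xi n)$---yields $\beta(m^\ast)\le O(\xi n)+2\,(\text{total medium length})$. Comparing with $\beta(m^\ast)\ge un/2$ forces the total medium length, hence (after absorbing short blocks, or after subtracting the short medium blocks) the total \emph{long} medium length, to be $\ge\xi n$ once $\xi=\xi(u)$ is chosen small enough; multiplying by $\log(1/\rho)$ gives (iii).

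The main obstacle is precisely this last step: (i)--(ii) are a soft convexity/pigeonhole argument about the branching function, but (iii) must convert a purely \emph{single-scale} non-concentration hypothesis into quantitative non-extremality of the entire branching profile---the key being that \eqref{eq:single-scale-Frostman} keeps $\beta$ away from $0$ exactly at the scale $\delta|P|_\delta^{1/2}$, while monotonicity of the block slopes (inherited from convexity of $g$) forbids $\beta$ from being flat before that scale and steep after it. Subsidiary technical points are the careful matching of the $\rho$-dependent constants in the reduction of (i) to the chord inequality (whence $\rho$ must be small in terms of $\e$, and $n$ large in terms of $\rho$), and tracking the small loss coming from short blocks against the choice of $\xi(u)$.
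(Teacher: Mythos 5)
First, a point of reference: the paper does not prove this statement at all --- it is imported (in translated notation) from \cite[Theorem 4.1]{Shmerkin22b}, and the only ``proof'' given here is the dictionary between the two formulations. So your proposal is a from-scratch reconstruction rather than an alternative to an argument in the text. That said, your route --- pass to the branching function $\beta$ of the uniform set, take its greatest convex minorant, group its linear pieces into $O(1/\e)$ blocks by slope window, and invoke the single-scale hypothesis \eqref{eq:single-scale-Frostman} only to force $\beta(m^\ast)\gtrsim un$ at $m^\ast\approx n(1-\gamma/2)$ and hence a definite amount of medium-slope length --- is the standard mechanism for such decompositions and is close in spirit to the source. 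Your treatment of (ii) (short blocks contribute at most $2N_0(\e)\tau n$ to the exact identity $\sum_j\alpha_j\log(1/\lambda_j)=\log|P|_\delta$) and of (iii) (monotonicity of the $\alpha_j$ pins the high-slope blocks to a terminal stretch of length at most $\gamma n/2+O(\xi n)$, so the increment $\beta(m^\ast)\ge un/2$ must be carried by medium blocks) is correct in outline.

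There is, however, one genuine gap: the reduction of (i) to the chord inequality $\beta(m_j+k)-\beta(m_j)\ge k\alpha_j-\e L_j$ fails for short blocks, because $(\rho^j)$-uniformity carries no information about the spatial arrangement of the $\Delta_{j+1}$-children of a $\Delta_j$-cube at radii strictly between consecutive powers of $\rho$. Concretely, take a block of length $L_j=1$ with $\alpha_j=1$: all $\rho^{-1}$ children of $Q$ may lie in a ball of radius $\rho^{1/2}\Delta_j$, and then at $r=\rho^{1/2}$ the left side of (i) equals $\rho^{-1}$ while the right side is $\rho^{-\e}\cdot\rho^{1/2}\cdot\rho^{-1}=\rho^{-1/2-\e}$, so (i) fails even though the chord inequality is vacuously true. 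In general the loss in rounding $r$ to a power of $\rho$ is a factor $\rho^{-\Theta(1)}$, not an absolute constant, and it is absorbed by $\lambda_j^{-\e}=\rho^{-\e L_j}$ only when $L_j\gtrsim 1/\e$; shrinking $\rho$ does not help. The repair is cheap and is in fact visible in how the paper glosses the imported theorem (``we simply take $\alpha_j=0$''): declare $\alpha_j=0$ on every block of length less than $C/\e$, which makes (i) trivial there, perturbs (ii) by only $O_\e(1)\le \e\log(1/\delta)$ once $n\ge n_0(\rho,\e)$, and leaves (iii) untouched since that sum only sees blocks of length at least $\tau n$. Two smaller bookkeeping items: your count for (iii) needs the total length of \emph{short} medium blocks, $\le N_0(\e)\tau(\e)n$, to be small compared with $un$, which forces either $\tau$ to depend on $u$ as well or a preliminary reduction to $\e\le u$; and the step $\beta(m^\ast)\ge un/2$ requires noting that a ball of radius $\approx\delta|P|_\delta^{1/2}$ centred in $P$ essentially contains a full dyadic $\rho^{m^\ast}$-cube of $P$, which costs only $O(1)$ in the exponent.
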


This is just (a slightly weaker version of) \cite[Theorem 4.1]{Shmerkin22b}, although stated using different notation: the measure $\mu$ there corresponds to normalized Lebesgue measure on $P(\delta)$ (or the union of $\delta$-squares intersecting $\delta$); $\rho$ corresponds to $2^{-T}$ in \cite{Shmerkin22b}; the scales $\Delta_j$ correspond to both $2^{-TA_i}$ and $2^{-TB_i}$. The last two claims only concern scale intervals $[\Delta_{j+1},\Delta_j]$ corresponding to $[2^{-T B_i}, 2^{-TA_i}]$, while for $[\Delta_{j+1},\Delta_j] = [2^{-T B_{i+1}}, 2^{-T A_i}]$ we simply take $\alpha_j=0$.


\section{Proof of Proposition \ref{prop:main}}

In this section we prove Proposition \ref{prop:main}. Note that there is no loss of generality in assuming that $P$ is a union of dyadic $\delta$-squares and the tubes in $\mathcal{T}(p)$ are dyadic $\delta$-tubes intersecting $p$.

The parameter $\e$ should be thought of being much smaller than $\eta$ (and will be chosen after $\eta$). Both $\e,\eta$ will ultimately be chosen in terms of $s,t$ only. We let $N=\lceil \eta^{-1}\rceil$, so that $N=N(s,t)$. Let $\rho=\delta^{1/N}$; without loss of generality, $\rho$ is dyadic.

In the course of this proof, $A\lessapprox B$ stands for $A\le C(s,t) \delta^{-C(s,t)\e} B$. Likewise, a $(\delta,u)$-set is a $(\delta,u,C)$-set for $C\lessapprox 1$.

Replacing $P$ by its $(\rho^j)_{j=1}^N$-uniformization (given by Lemma \ref{lem:uniformization}), we may assume that $P$ is $(\rho^j)_{j=1}^N$-uniform. Note that the uniformization is still a $(\delta,t)$-set.

We will construct sequences $P_j\subset\mathcal{D}_\delta$, $\mathcal{T}_j\subset\mathcal{T}^{\delta}$, $j=0,1,\ldots,N$ with the following properties:
\begin{enumerate}[(a)]
  \item \label{it:a} $P_j$ is  $(\rho^j)_{j=1}^N$-uniform.
  \item \label{it:b} $P_{j+1}\subset P_j$, $P_0\subset P$ and $|P_{j+1}|\gtrapprox |P_j|$, $|P_0|\gtrapprox|P|$. In particular, $P_N$ is a $(\delta,t)$-set.
  \item \label{it:c} $\mathcal{T}_0(p)=\mathcal{T}(p)$, $\mathcal{T}_{j+1}(p)\subset\mathcal{T}_{j}(p)$ and $M_{j+1}\sim|\mathcal{T}_{j+1}(p)|\gtrapprox |\mathcal{T}_{j}(p)|$ for $p\in P_{j+1}$ and some $M_{j+1}$. In particular, each $\mathcal{T}(p)$, $p\in P_N$ is a $(\delta,t)$-set.
  \item \label{it:d} For each $j$, any $Q\in\mathcal{D}_{\rho^j}(P_j)$ and any $\delta$-tube $T$,
  \[
  |\mathcal{T}| \gtrapprox M \cdot |P_j|_{\rho^j}^{1/2} \cdot |T\cap P_j\cap  Q|_\delta.
  \]
\end{enumerate}

Recall that $|\mathcal{T}(p)|\geq M$ for each $p\in P$. Pigeonhole a dyadic number $M_0$ such that $|\mathcal{T}(p)|\sim M_0$ for all $p\in P'_0\subset P$ with $|P'_0|\gtrapprox |P|$. We let $P_0$ be the $(\rho^j)_{j=1}^N$-uniformization of $P'_0$ given by Lemma \ref{lem:uniformization}. Then $P_0$ is a $(\delta,t)$-set, and we take $\mathcal{T}_0(p)=\mathcal{T}(p)$ for $p\in P_0$.

Once $P_j,\mathcal{T}_j$ are defined, we let $P'_{j+1}$, $\mathcal{T}'_{j+1}$ be the objects provided by Proposition \ref{prop:induction-on-scales} applied to $(P_{j},\mathcal{T}_j)$ at scale $\Delta=\rho^{j+1}$. It follows from Proposition \ref{prop:induction-on-scales}\eqref{it:induction-i} and the regularity of $P_j$ that $P'_{j+1}\subset P_j$ and  $|P'_{j+1}|\gtrapprox P_j$. Pigeonhole a number $M_{j+1}$ such that $|\mathcal{T}'_{j+1}(p)|\sim M_{j+1}$ for all $p\in P''_{j+1}\subset P'_{j+1}$, where $|P''_{j+1}|\gtrapprox |P'_{j+1}|$. Finally, let $P_{j+1}$ be the $(\rho^j)_{j=1}^N$-uniformization of $P''_{j+1}$ and $\mathcal{T}_{j+1}(p)=\mathcal{T}'_{j+1}(p)$ for $p\in P_{j+1}$.

Properties \eqref{it:a}--\eqref{it:b} hold by construction. Property \eqref{it:c} follows from Proposition \ref{prop:induction-on-scales}\eqref{it:induction-ii}. To see Property
\eqref{it:d}, let $C_{\Delta}$, $M_{\Delta}$, $\mathcal{T}_Q$, $C_Q$, $M_Q$ be the objects provided by Proposition \ref{prop:induction-on-scales}\eqref{it:induction-iii}--\eqref{it:induction-iv}. By Corollary \ref{cor:nice-classical},
\[
|\mathcal{T}^{\Delta}(\mathcal{T}'_{j+1})| \gtrapprox M_\Delta \cdot |P_{j+1}|^{1/2},
\]
and by Lemma \ref{lem:large-tube} and rescaling,
\[
|\mathcal{T}_Q| \gtrapprox M_Q \cdot |T\cap P_j \cap Q|_{\delta} .
\]
Putting these facts together with  \eqref{eq:lower-bound-T}, we see that Property \eqref{it:d} holds.

We pause to observe that $(\mathcal{P}_N,\mathcal{T}_N)$ is a $(\delta,s,C_N,M_N)$-nice configuration where $C_N\lessapprox 1$ and $M_N\gtrapprox M$.

We now consider several cases. Suppose first that there are $T\in\mathcal{T}_N$, $j\in\{1,\ldots,N\}$ and $Q\in\mathcal{D}_{\rho^j}(P_N)$  such that
\[
|T\cap P_N\cap Q|_{\delta} \ge \delta^{-2\eta} \cdot |P_N\cap Q|_{\delta}^{1/2}.
\]
Then, by \eqref{it:d}, the uniformity of $P_N$, and the fact that $P_j\supset P_N$ is a $(\delta,t)$-set, we see that \eqref{eq:furst-gain} holds if $\e$ is small enough in terms of $s,t,\eta$.

Hence, we assume from now on that
\begin{equation} \label{eq:tube-upper}
|T\cap P_N\cap Q|_{\delta} \le \delta^{-2\eta} \cdot |P_N\cap Q|_{\delta}^{1/2},
\end{equation}
for  $j\in\{1,\ldots,N\}$, $Q\in\mathcal{D}_{\rho^j}(P_N)$, and $T\in\mathcal{T}_N$.

We consider two further sub-cases. Suppose first that for at least half of the squares $p$ in $P_N$, at least half of the tubes $T\in\mathcal{T}_N(p)$ satisfy
\[
|T\cap P_N|_{\delta} \le \delta^{2\eta}\cdot |P_N|_{\delta}^{1/2}.
\]
Then Lemma \ref{lem:small-tube} (applied to suitable restrictions of $P_N$ and $\mathcal{T}_N$) yields \eqref{eq:furst-gain}.

We can then assume that
\begin{equation} \label{eq:tube-lower}
|T\cap P_N|_{\delta} \ge \delta^{2\eta}\cdot |P_N|_{\delta}^{1/2}\quad\text{for all }T\in\mathcal{T}'_N,
\end{equation}
where $\mathcal{T}'_N=\cup_{p\in P'_N} \mathcal{T}'(p)$, with $|P'_N|\ge |P_N|/2$ and $|\mathcal{T}'_N(p)|\ge |\mathcal{T}_N(p)|/2$. It then follows from \eqref{eq:tube-upper} that, for any $Q\in\mathcal{D}_{\rho^n}(P_N)$ and $T\in\mathcal{T}'_N$,
\begin{align*}
|T\cap P_N\cap Q|_\delta &\le \delta^{-4\eta} \left(|P_N|_{\delta}^{-1/2}|P_N\cap Q|_{\delta}^{1/2}\right) |T\cap P_N|_{\delta}\\
&\le \delta^{-5\eta} (\rho^{j})^{t/2} |T\cap P_N|_{\delta},
\end{align*}
using that $P_N$ is a $(\delta,t)$-set and taking $\e$ sufficiently small in terms of $\eta,s,t$.

Recalling that $N=\lceil \eta^{-1}\rceil$, we deduce that for any $r$-ball $B_r$ with $\rho^{j+1}\le r<\rho^j$,
\[
|T\cap P_N\cap B_r|_\delta \lesssim \delta^{-5\eta} (\rho^j)^{t/2}|T\cap P_N|_\delta \le \delta^{-6\eta} r^{t/2}|T\cap P_N|_\delta,
\]
so that $T\cap P_N$ is a $(\delta,t/2,\delta^{-7\eta})$-set for each $T\in\mathcal{T}'$.

Now, taking $\e$ small enough in terms of $s,t,\eta$, we deduce from Corollary \ref{cor:nice-classical} that $\mathcal{T}'$ contains a $(\delta,s+t/2,\delta^{-\eta})$-set $\mathcal{T}''$. Then $\{ T\cap P_N:T\in \mathcal{T}''\}$ satisfies the assumptions of \cite[Theorem 3.2]{OrponenShmerkin21} (with $t/2$ in place of $s$ and $s+t/2$ in place of $t$), provided that $\eta$ and $\delta$ are taken small enough in terms of $s,t$ only. Applying \cite[Theorem 3.2]{OrponenShmerkin21}, we conclude that  $|P_N|_{\delta}> \delta^{-t-3\eta}$ (again assuming $\eta=\eta(s,t)$ is small enough). The first claim of Corollary \ref{cor:nice-classical} then yields \eqref{eq:furst-gain}.

\section{Proof of Theorem  \ref{thm:Furstenberg-Bourgain}}

By iterating Proposition \ref{prop:induction-on-scales}, we obtain the follow multiscale version.
\begin{cor} \label{cor:induction-on-multiscales}
Fix $N\ge 2$ and dyadic numbers
\[
0<\delta=\Delta_N<\Delta_{N-1}<\ldots<\Delta_1<\Delta_0 = 1.
\]
Let $(P_0,\mathcal{T}_0)$ be a $(\delta,s,C,M)$-nice configuration. Then there exists a set $P\subset P_0$  such that the following hold:
\begin{enumerate}[(\rm i)]
\item \label{it:induction-multi-i} $|\mathcal{D}_{\Delta_j}(P)| \approx_{\delta} |\mathcal{D}_{\Delta_j}(P_{0})|$ and $|P\cap Q| \approx_{\delta}|P_0\cap Q|$ for all $j\in [1,N]$ and all $Q\in\mathcal{D}_{\Delta_j}(P)$.
\item \label{it:induction-multi-ii} For each $j\in [0,N-1]$ and each $Q\in\mathcal{D}_{\Delta_j}(P)$ there exist $C_Q \approx_{\delta} C$, $M_Q\ge 1$,  and a family of tubes $\mathcal{T}_{Q}\subset\mathcal{T}^{\Delta_{j+1}/\Delta_j}$ such that $(S_{Q}(P\cap Q),\mathcal{T}_Q)$ is $(\Delta_{j+1}/\Delta_j,s,C_Q,M_Q)$-nice.
\end{enumerate}
Furthermore, the families  $\mathcal{T}_{Q}$ can be chosen so that if $Q_j\in\mathcal{D}_{\Delta_j}(P)$, then
\begin{equation} \label{eq:lower-bound-T-multi}
\frac{|\mathcal{T}_0|}{M} \gtrapprox_\delta \prod_{j=0}^{N-1}  \frac{|\mathcal{T}_{Q_j}|}{M_{Q_j}}.
\end{equation}
All the constants implicit in the $\approx_{\delta}$ notation are allowed to depend on $N$.
\end{cor}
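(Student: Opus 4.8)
The plan is to prove Corollary~\ref{cor:induction-on-multiscales} by induction on the number of scales $N$, peeling off the coarsest gap $[\Delta_1,1]$ with a single application of Proposition~\ref{prop:induction-on-scales} and then recursing inside each surviving $\Delta_1$-square after rescaling by $S_Q$. For the base case $N=2$, Proposition~\ref{prop:induction-on-scales} applied at $\Delta=\Delta_1$ already gives everything: item (iv) produces, for each $Q\in\mathcal{D}_{\Delta_1}(P)$, a family $\mathcal{T}_Q\subset\mathcal{T}^{\delta/\Delta_1}=\mathcal{T}^{\Delta_2/\Delta_1}$ with $(S_Q(P\cap Q),\mathcal{T}_Q)$ nice, which handles the gap $j=1$; item (iii) produces the nice pair $(\mathcal{D}_{\Delta_1}(P),\mathcal{T}^{\Delta_1}(\mathcal{T}))$, which (since $S_{[0,1)^2}$ is the identity and $\Delta_1/\Delta_0=\Delta_1$) serves as the datum $\mathcal{T}_{[0,1)^2}$ for the gap $j=0$; and \eqref{eq:lower-bound-T} is precisely \eqref{eq:lower-bound-T-multi} for $N=2$.

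For the inductive step, given $\delta=\Delta_N<\cdots<\Delta_0=1$ and a $(\delta,s,C,M)$-nice configuration $(P_0,\mathcal{T}_0)$, I would first apply Proposition~\ref{prop:induction-on-scales} at $\Delta=\Delta_1$ to obtain $\widetilde P\subset P_0$, subfamilies $\widetilde{\mathcal{T}}(p)\subset\mathcal{T}_0(p)$, the nice pair $(\mathcal{D}_{\Delta_1}(\widetilde P),\mathcal{T}^{\Delta_1}(\widetilde{\mathcal{T}}))$ at scale $\Delta_1$, and, for each $Q\in\mathcal{D}_{\Delta_1}(\widetilde P)$, a nice pair $(S_Q(\widetilde P\cap Q),\mathcal{T}_Q^{\ast})$ at scale $\delta/\Delta_1$. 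Then I would apply the inductive hypothesis (for $N-1$ scales) inside each such $Q$ to $(S_Q(\widetilde P\cap Q),\mathcal{T}_Q^{\ast})$ with the rescaled scales $\Delta_{i+1}/\Delta_1$, $i=0,\dots,N-1$ (note $\Delta_1/\Delta_1=1$ and $\Delta_N/\Delta_1=\delta/\Delta_1$), obtaining $P^Q\subset S_Q(\widetilde P\cap Q)$ together with all the lower tube families and the sub-factorization $|\mathcal{T}_Q^{\ast}|/M_Q^{\ast}\gtrapprox_\delta\prod_{i=0}^{N-2}|\mathcal{T}_{Q'_i}|/M_{Q'_i}$. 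Finally I would set $P:=\bigcup_{Q\in\mathcal{D}_{\Delta_1}(\widetilde P)}S_Q^{-1}(P^Q)$; to a cube $Q'\in\mathcal{D}_{\Delta_j}(P)$ with $j\ge1$ one attaches the $S_Q^{-1}$-pullback of the family produced inside the unique $\Delta_1$-cube $Q\supseteq Q'$, and to $[0,1)^2$ one attaches $\mathcal{T}^{\Delta_1}(\widetilde{\mathcal{T}})$.

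The verifications are then mostly bookkeeping. Membership in $\mathcal{T}^{\Delta_{j+1}/\Delta_j}$ and niceness of $(S_{Q'}(P\cap Q'),\mathcal{T}_{Q'})$ hold because for a $\Delta_j$-cube $Q'\subset Q$ the homothety $S_{Q'}$ factors as an inner homothety composed with $S_Q$, so niceness is inherited from the inductive hypothesis; the factorization \eqref{eq:lower-bound-T-multi} follows by substituting the sub-factorization for each $|\mathcal{T}_Q^{\ast}|/M_Q^{\ast}$ into \eqref{eq:lower-bound-T}, using the maximum over $Q\in\mathcal{D}_{\Delta_1}(\widetilde P)$ there to select the branch containing a chosen nested chain of cubes, and telescoping the rescaled scales $\Delta_{i+1}/\Delta_1$ back to $\Delta_{j+1}/\Delta_j$. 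The delicate step, which I expect to be the main obstacle, is property (i): at scale $\Delta_1$ it is exactly the output of Proposition~\ref{prop:induction-on-scales}, but at scales $\Delta_2,\dots,\Delta_N$ the inductive hypothesis inside each $Q$ only compares $P$ with $S_Q(\widetilde P\cap Q)$, so one must chain these comparisons with the scale-$\Delta_1$ comparison to reassemble a genuine comparison with $P_0$. The cleanest way to make this airtight — and the way the corollary is actually used in Section~\ref{sec:Bourgain}\ (indeed throughout) — is to first replace $P_0$ by its $(\Delta_j)_{j=1}^N$-uniformization via Lemma~\ref{lem:uniformization}, at the cost of a $\log(1/\delta)^{O_N(1)}$ factor absorbed into $\approx_\delta$, since uniformity forces $\delta$-covering numbers to factor multiplicatively across all the scales $\Delta_j$. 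The one quantitative point to monitor is that implicit constants compound over the $N$ levels; but each level contributes only a $\log(1/\delta)^{O(1)}$ factor and a $\approx C$ distortion, so after $N=O_N(1)$ levels everything remains of the form $\log(1/\delta)^{O_N(1)}$ with $C_Q\approx_\delta C$, exactly as claimed (the statement explicitly permits dependence on $N$). No new idea beyond iterating Proposition~\ref{prop:induction-on-scales} is needed; all the substance lies in that single-scale statement.
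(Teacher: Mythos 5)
Your recursion runs in the opposite direction to the paper's: you peel off the coarsest gap $[\Delta_1,1]$ and then apply the inductive hypothesis separately inside each surviving $\Delta_1$-square, whereas the paper peels off the \emph{finest} gap (applying Proposition~\ref{prop:induction-on-scales} with $\Delta$ equal to the second-finest scale) and then applies the inductive hypothesis once, globally, to the coarse nice configuration $(\mathcal{D}_{\Delta}(P'),\mathcal{T}^{\Delta}(\mathcal{T}))$ furnished by item (iii). This difference is not cosmetic: it is exactly what decides whether the ``furthermore'' clause comes out as stated.

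The gap is in \eqref{eq:lower-bound-T-multi}. The statement asserts it for an arbitrary choice of $Q_j\in\mathcal{D}_{\Delta_j}(P)$, one cube per level, with no nestedness requirement. In your construction the data at levels $j\ge 1$ comes from independent applications of the inductive hypothesis inside different $\Delta_1$-squares, and \eqref{eq:lower-bound-T} supplies only a single factor $\max_{Q}|\mathcal{T}^{*}_{Q}|/M^{*}_{Q}$; indeed you say explicitly that you ``select the branch containing a chosen nested chain of cubes.'' If, say, $Q_2$ and $Q_3$ lie in different $\Delta_1$-squares $A\neq B$, your inequalities bound each factor $|\mathcal{T}_{Q_j}|/M_{Q_j}$ individually by $R:=(|\mathcal{T}_0|/M)\big/\big(|\mathcal{T}^{\Delta_1}(\widetilde{\mathcal{T}})|/M_{\Delta_1}\big)$, but not their product: branch $A$ could spend its whole allowance $R$ at level $2$ and nothing deeper, branch $B$ the reverse, and the mixed choice then gives a product of order $R^{2}$, which is consistent with everything you have proved yet exceeds what \eqref{eq:lower-bound-T-multi} permits. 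So your argument establishes the corollary only along nested chains (or choices confined to a single $\Delta_1$-branch); to recover the stated version by your route you would need an additional uniformization/pigeonholing step making the branch data comparable across $\Delta_1$-squares, which you do not supply. The paper's order of induction avoids the issue: all levels except the finest come from one application of the inductive hypothesis to the single coarse configuration (whose ``points'' are the coarse squares), and only the finest level is per-cube, where the maximum in \eqref{eq:lower-bound-T} absorbs it. A secondary caveat: pre-uniformizing $P_0$ to rescue item \eqref{it:induction-multi-i} changes the reference set, since counts such as $|\mathcal{D}_{\Delta_j}(P_0)|$ and $|P_0\cap Q|$ need not be preserved (up to logarithms) when passing to the uniform subset; this is harmless for how the corollary is actually used (there $P_0$ is already uniform), but as a proof of the statement for general $P_0$ it should be addressed rather than absorbed silently.
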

\begin{proof}
We proceed by induction in $N$. The case $N=2$ follows from Proposition \ref{prop:induction-on-scales}. Suppose the claim holds for $N$ and let us verify it for $N+1$. First apply Proposition \ref{prop:induction-on-scales} with $\delta=\Delta_{N+1}$ and $\Delta=\Delta_{N}$.  Let $P'$, $\mathcal{T}$ be the resulting objects. Property \eqref{it:induction-multi-ii} holds (at the moment for $P'$) for $j=N$ thanks to Proposition \ref{prop:induction-on-scales}\eqref{it:induction-iv}. We then apply the inductive assumption to $(\mathcal{D}_{\Delta_{N}}(P'),\mathcal{T}^{\Delta_N}(\mathcal{T}))$, which is legitimate by Proposition \ref{prop:induction-on-scales}\eqref{it:induction-iii}. This yields a set $P''\subset \mathcal{D}_{\Delta_{N}}$; we define
\[
P = \bigcup_{Q\in \mathcal{D}_{\Delta_{N}}(P'')} P'\cap Q.
\]
This ensures that $S_{Q}(P\cap Q)$, when viewed at scale $\Delta_{j+1}/\Delta_{j}$, equals $P''$ for $j<N$ and $P'$ for $j=N$, and so Property \eqref{it:induction-multi-ii} holds for all $j\in [0,N]$.  Finally, \eqref{eq:lower-bound-T-multi} holds thanks to \eqref{eq:lower-bound-T} and the inductive assumption.
\end{proof}

\begin{proof}[Proof of Theorem \ref{thm:Furstenberg-Bourgain}]
We will eventually take $\eta=\eta(s,u)$ and $\e=\e(\eta,s,u)\ll \eta$. Fix $\rho=\rho(\e)\in (0,1)$ small enough that the conclusion of Theorem \ref{thm:multiscale-decomposition} holds, and
\[
\frac{\log(4\log(1/\rho))}{\log(1/\rho)} < \e.
\]
Then, for $\delta=\rho^n$, the $(\rho^j)_{j=1}^n$-uniformization $P'$ of $P$ given by Lemma \ref{lem:uniformization} satisfies $|P'|\ge \delta^{\e} |P|$. We assume from now on that $\delta=\rho^n$, where $n$ is taken sufficiently large in terms of all other parameters. Take $\e<\min(u/2,\eta/2)$. Then $P'$ satisfies \eqref{eq:single-scale-non-conc} with $u/2$ in place of $u$, and the conclusion \eqref{eq:furst-gain} for $P'$ implies it for $P$ with $\eta/2$ in place of $\eta$. Hence we assume from now on that $P$ is $(\rho^j)_{j=1}^n$-uniform.

We apply Theorem \ref{thm:multiscale-decomposition} to $P$. Let $\xi=\xi(u)$, $\tau=\tau(\e)$ be the numbers provided by the theorem, and let $(\Delta_j)_{j=0}^N$ and $(\alpha_j)_{j=1}^N$ be the scales and exponents corresponding to $P$.

Let $\lambda_j = \Delta_{j+1}/\Delta_j$. We apply Corollary \ref{cor:induction-on-multiscales} to $(P,\mathcal{T})$ and the scales $(\Delta_j)$. Let $P'\subset P$ be the resulting set. Since $N\le N_0(\e)$, the notation $A\lessapprox_{\delta} B$ in the statement of Corollary \ref{cor:induction-on-multiscales} translates to $A\le \log(1/\delta)^{C(\e)} B$; in particular, $A \le \delta^{\e\tau/2} B$  if $\delta$ is small enough in terms of $\e$. With these remarks, the $(\Delta_j)_{j=1}^N$-uniformity of $P$, Corollary \ref{cor:induction-on-multiscales}\eqref{it:induction-multi-i} and Theorem \ref{thm:multiscale-decomposition}\eqref{it:multiscale-dec-i} show that if $Q\in\mathcal{D}_{\Delta_j}(P')$, then
$S_Q(P'\cap Q)$ is a $(\lambda_j, \alpha_j, \lambda_j^{-\e})$-set whenever $\lambda_j \le \delta^\tau$.

Let
\begin{align*}
\mathcal{N} &= \big\{ j:  \lambda_j \le \delta^{\tau},\alpha_j\notin [\xi,2-\xi] \big\},\\
\mathcal{G} &= \big\{ j:  \lambda_j \le \delta^{\tau},\alpha_j\in [\xi,2-\xi] \big\}.
\end{align*}
(Here $\mathcal{N}$ stands for ``normal'' and $\mathcal{G}$ for ``good'' scales.)  It follows from Corollary \ref{cor:induction-on-multiscales}\eqref{it:induction-multi-ii} combined with Corollary \ref{cor:nice-classical} and Proposition \ref{prop:main} that, for $Q\in\mathcal{D}_{\Delta_j}(P')$,
\begin{align*}
j \in \mathcal{N} &\Longrightarrow |\mathcal{T}_Q|\ge (1/\log\delta)^{C(\e,s)}\cdot  M_Q\cdot |S_Q(P\cap Q)|_{\lambda_j}^{1/2}, \\
j \in \mathcal{G} &\Longrightarrow |\mathcal{T}_Q|\ge (1/\log\delta)^{C(\e,s)}\cdot  M_Q\cdot \lambda_j^{-(\alpha_j/2-\eta)},
\end{align*}
where $\eta=\eta(\xi,s)=\eta(u,s)>0$. It is indeed possible to take a value of $\eta$ uniformly over $t\in [\xi,2-\xi]$ because the value of $\eta$ in Proposition \ref{prop:main} is robust under perturbations of $t$. Note that since $\tau=\tau(\e)$, if $\delta$ is small enough in terms of $\e$, and $j\in\mathcal{G}$, then $\lambda_j \le \delta^{\tau}$ is small enough that Proposition \ref{prop:main} is indeed applicable. It follows from Theorem \ref{thm:multiscale-decomposition}\eqref{it:multiscale-dec-i} that
\[
|S_Q(P\cap Q)|_{\lambda_j} \ge \lambda_j^{\e-\alpha_j}.
\]
Combining these facts with the conclusion \eqref{eq:lower-bound-T-multi} and the trivial bound $|\mathcal{T}_Q|\ge M_Q$ (applied at scales outside $\mathcal{N}\cup \mathcal{G}$), we obtain
\begin{align*}
\frac{|\mathcal{T}|}{M} &\ge \left(\prod_{j\in\mathcal{N}} \lambda_j^{\e/2}\lambda_j^{-\alpha_j/2} \right)\left( \prod_{j\in\mathcal{G}} \lambda_j^{-\eta}\lambda_j^{-\alpha_j/2} \right) \\
&\ge \delta^{\e} \left(\prod_{j\in\mathcal{N}\cup\mathcal{G}} \lambda_j^{-\alpha_j/2}\right) \left( \prod_{j\in\mathcal{G}} \lambda_j^{-\eta}\right) \\
&\ge \delta^{3\e} \cdot |P|_{\delta}^{1/2}\cdot \delta^{-\xi\eta},
\end{align*}
using Theorem \ref{thm:multiscale-decomposition}\eqref{it:multiscale-dec-ii}--\eqref{it:multiscale-dec-iii} for the last inequality. Taking $\e<\xi\eta/6$, this gives the claim with $\xi\eta/2$ in place of $\eta$.

\end{proof}

\section{Connection with Bourgain's Projection Theorem}
\label{sec:Bourgain}

We conclude by showing how Theorem \ref{thm:Furstenberg-Bourgain} has as corollaries both Bourgain's discretized projection and sum-product theorems. We start with the former. Let $\Pi_s(x,y)=x-sy$.
\begin{thm}[Bourgain's discretized projection theorem (\cite{Bourgain10, Shmerkin22b})]
Given $s,u\in (0,1)$ there are $\e,\eta>0$ such that the following hold.

Let $P\subset B^2(0,1)$ satisfy
\[
\left|P\cap B\left(x,\delta |P|_{\delta}^{1/2}\right)\right|_{\delta} \le \delta^u |P|_{\delta}, \quad x\in B^2(0,1).
\]
Let $S\subset [1,2]$ be a $(\delta,s,\delta^{-\e})$-set. Then there is $s\in S$ such that
\[
|\Pi_s(P')|_{\delta} \ge \delta^{-\eta} \cdot |P|_{\delta}^{1/2}\quad\text{for all }P'\subset P, |P'|_{\delta}\ge \delta^{\e}|P|_{\delta}.
\]
\end{thm}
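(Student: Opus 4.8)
The plan is to deduce Bourgain's projection theorem from Theorem \ref{thm:Furstenberg-Bourgain} by recognizing projections as a special ``product'' case of the point-tube incidence picture. Concretely, I would set up the following correspondence: for each point $p=(x,y)\in P$ and each slope $s\in S$, consider the line $\{(a,b): b = x - a y\}$ in the dual $(a,b)$-plane; equivalently, let $\mathcal{T}(p)$ be the family of dyadic $\delta$-tubes through (a $\delta$-neighborhood of) $p$ whose slopes lie in $S$. Since $S$ is a $(\delta,s,\delta^{-\e})$-set and, by \cite[Corollary 2.12]{OrponenShmerkin21}, tubes through a fixed $\delta$-square are parametrized bi-Lipschitz by their slope, each $\mathcal{T}(p)$ is a $(\delta,s,O(\delta^{-\e}))$-set of tubes through $p$ with $|\mathcal{T}(p)|\ge M$ for $M\sim|S|_\delta$. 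The single-scale non-concentration hypothesis on $P$ is exactly \eqref{eq:single-scale-non-conc}, so Theorem \ref{thm:Furstenberg-Bourgain} applies and gives $|\mathcal{T}|=|\bigcup_{p\in P}\mathcal{T}(p)|\ge \delta^{-\eta_0} M|P|_\delta^{1/2}$ for some $\eta_0=\eta_0(s,u)>0$.

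The second step is to convert this bound on the total number of distinct tubes into a lower bound on some single projection $\Pi_s(P)$. Here I would use the pigeonhole/Fubini structure of the product configuration: the tubes in $\mathcal{T}$ are naturally fibered over the slope set $S$, since a dyadic $\delta$-tube determines its slope $\delta$-square. Writing $\mathcal{T}_s$ for the tubes in $\mathcal{T}$ of slope (approximately) $s$, we have $|\mathcal{T}|=\sum_{s\in S}|\mathcal{T}_s|\le |S|_\delta \max_{s\in S}|\mathcal{T}_s| \sim M\max_{s\in S}|\mathcal{T}_s|$, hence there is $s\in S$ with $|\mathcal{T}_s|\ge \delta^{-\eta_0}|P|_\delta^{1/2}$. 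Now $|\mathcal{T}_s|$ is, up to the usual constants relating dyadic tubes to $\delta$-neighborhoods of lines, comparable to $|\Pi_s(P)|_\delta$ — a tube of slope $s$ through $p$ is determined by the intercept $x-sy=\Pi_s(p)$, so the number of distinct slope-$s$ tubes meeting $P$ is $\sim|\Pi_s(P)|_\delta$. This yields $|\Pi_s(P)|_\delta\ge \delta^{-\eta_0}|P|_\delta^{1/2}$ for that particular $s$.

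The final step is to upgrade ``for some $P'=P$'' to ``for all large subsets $P'\subset P$''. This is a soft argument: suppose the conclusion failed, so that for every $s\in S$ there is a subset $P'_s\subset P$ with $|P'_s|_\delta\ge\delta^\e|P|_\delta$ but $|\Pi_s(P'_s)|_\delta< \delta^{-\eta}|P|_\delta^{1/2}$. By pigeonholing over the (at most $\log(1/\delta)$-many relevant) dyadic values of $|P'_s|_\delta$ and a further pigeonholing, one can pass to a single subset $P''\subset P$ with $|P''|_\delta\ge \delta^{2\e}|P|_\delta$ that works simultaneously for a positive proportion of slopes; alternatively, and more cleanly, one runs the whole argument above with $P$ replaced by an arbitrary $P'\subset P$ with $|P'|_\delta\ge\delta^\e|P|_\delta$. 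Indeed $P'$ still satisfies \eqref{eq:single-scale-non-conc} with $u$ replaced by $u-2\e$ (since $|P'|_\delta^{1/2}\le |P|_\delta^{1/2}$ and covering numbers only go down), so Theorem \ref{thm:Furstenberg-Bourgain} applied to $P'$ and the fixed slope set $S$ gives, as above, some $s=s(P')\in S$ with $|\Pi_s(P')|_\delta\ge \delta^{-\eta_0}|P'|_\delta^{1/2}\ge \delta^{-\eta_0+\e}|P|_\delta^{1/2}$; taking $\eta=\eta_0/2$ and $\e<\eta_0/4$ finishes it, except that the bad slope might depend on $P'$. To remove this dependence one notes that there are only $O(|S|_\delta)=O(\delta^{-s})$ candidate slopes and $O(\log(1/\delta))$ dyadic size-classes of subsets, so a final pigeonhole produces a single $s\in S$ that works for all $P'$ of a given size class simultaneously, at the cost of shrinking $\eta$ slightly. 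The main obstacle is precisely this last uniformization over subsets: one must be careful that the quantifier order (``$\exists s\,\forall P'$'') is achieved rather than the weaker (``$\forall P'\,\exists s$''), which requires the counting to be done at the level of the full tube set $\mathcal{T}$ before any refinement of $P$, so that the slope achieving the maximum in the Fubini step is chosen once and for all; I would therefore organize the argument so that Theorem \ref{thm:Furstenberg-Bourgain} is invoked a single time on $P$ itself, and the robustness under passing to subsets is extracted from the structure of the resulting tube family rather than by re-invoking the theorem.
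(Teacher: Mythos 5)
Your first two steps are fine and match the paper's setup: the product configuration $\mathcal{T}(p)=\{$tubes through $p$ with slope in $S\}$ is a $(\delta,s,\delta^{-\epsilon},M)$-nice configuration with $M\sim|S|_\delta$, Theorem \ref{thm:Furstenberg-Bourgain} gives $|\mathcal{T}|\gtrsim\delta^{-\eta_0}M|P|_\delta^{1/2}$, and the Fubini/pigeonhole over slopes converts this into a large projection. But this only proves the statement with $P'=P$, and your third step --- upgrading to ``$\exists s\,\forall P'$'' --- contains a genuine gap. The ``final pigeonhole'' you propose does not exist: there are $O(\log(1/\delta))$ dyadic size classes and $O(\delta^{-s})$ slopes, but exponentially many (in $|P|_\delta$) subsets $P'$ within a size class, so no pigeonhole over subsets is available. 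Your fallback --- invoke the theorem once on $P$ and extract robustness under subsets ``from the structure of the resulting tube family'' --- is not an argument: a lower bound on $|\Pi_s(P)|_\delta$ says nothing about $|\Pi_s(P')|_\delta$ for a sub-proportional subset $P'$, since the large projection of $P$ could live entirely on $P\setminus P'$.

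The paper resolves this by running the contradiction in the opposite order, which is the standard exceptional-set device you started to write down and then abandoned. Assume the conclusion fails: for \emph{each} $s\in S$ there is a bad set $P_s\subset P$ with $|P_s|_\delta\ge\delta^{\epsilon}|P|_\delta$ and $|\Pi_s(P_s)|_\delta\le\delta^{-\eta}|P|_\delta^{1/2}$. Fubini on the incidence set $X=\{(p,s):p\in P_s\}$ produces a large set $P'$ of points $p$ each of which lies in $P_s$ for a set $S_p\subset S$ with $|S_p|\gtrsim\delta^{\epsilon}|S|$; one then defines $\mathcal{T}_p$ using \emph{only} the slopes in $S_p$ (not all of $S$). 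After applying Theorem \ref{thm:Furstenberg-Bourgain} and pigeonholing a slope $s$ carried by many tubes of $\mathcal{T}$, every such tube passes through a point $p$ with $s\in S_p$, i.e.\ $p\in P_s$, so the count of distinct slope-$s$ tubes lower-bounds $|\Pi_s(P_s)|_\delta$ --- contradicting the badness of $P_s$ itself. The essential point you are missing is that the bad subsets must be fixed \emph{before} the incidence theorem is applied, and the tube family must be built so that the tubes at slope $s$ are guaranteed to meet the specific bad set $P_s$; this is what makes the single pigeonholed slope defeat its own exceptional set and yields the correct quantifier order.
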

\begin{proof}
The argument is standard. Suppose the claim does not hold. Hence, for each $s\in S$ there is a set $P_s\subset P$ with $|P_s|_{\delta}\ge \delta^{\e}|P|_{\delta}$ such that
\[
|\Pi_s(P_s)|_{\delta} \le \delta^{-\eta} \cdot |P|^{1/2}.
\]
The set $X=\{ (p,s): p\in P_s\}$ has size $\ge \delta^{\e}|P||S|$, hence there is a set $P'\subset P$ with $|P'|_{\delta}\gtrsim \delta^{\e}|P|_{\delta}$ such that $|S_p|\gtrsim \delta^{\e}|S|$, where $S_p=\{ s: p\in P_s\}$.

Given a tube $T=\mathbf{D}([a,a+\delta]\times [b,b+\delta])$ (recall Definition \ref{def:dyadicTubes}), we let $\sigma(T)=[a,a+\delta]$ be the corresponding slope interval.
For each $p\in P'$ let $\mathcal{T}_p$ be the set of dyadic tubes through $p$ such that $\sigma(T)\cap S_p\neq \emptyset$.

If $\e<u/2$, then $P'$ still satisfies the single-scale non-concentration assumption (with $u/2$ in place of $u$). Since $S_p$ is a $(\delta,s,O(\delta^{-2\e}))$-set, so is $\mathcal{T}_p$. Also, $|\mathcal{T}_p|\gtrsim \delta^{\e}|S|$. Hence if $\e>0$ is small enough in terms of $s,u$, we can apply Theorem \ref{thm:Furstenberg-Bourgain} to obtain (for $\mathcal{T}=\cup_{p\in P'}\mathcal{T}_p$)
\[
|\mathcal{T}| \gtrsim \delta^{-\eta'} \cdot \delta^{\e}|S|\cdot |P'|_{\delta}^{1/2} \ge \delta^{2\e-\eta'} \cdot |S|\cdot |P|_{\delta}^{1/2},
\]
where $\eta'>0$ depends on $s,u$ only. Taking $\eta'>4\e$, this implies that there is $s\in S$ such that there are $\gtrsim \delta^{-\eta'/2}\cdot |P|_{\delta}^{1/2}$ tubes $T\in\mathcal{T}$ with $s\in \sigma(T)$. All of these tubes intersect $P_s$ by construction. We conclude that
\[
|\Pi_s P_s|_{\delta} \gtrsim \delta^{-\eta'/2} |P|_{\delta}^{1/2}
\]
which is a contradiction if we take $\eta=\eta'/3$.
\end{proof}

We turn to the discretized sum-product problem. We have the following corollary of Theorem \ref{thm:Furstenberg-Bourgain}:
\begin{cor}
Given $s,u\in (0,1)$, there are $\e,\eta>0$ such that the following holds for all small enough $\delta$.

Let $A, B_1, B_2\subset [1,2]$ satisfy the following: $A$ is a $(\delta,s,\delta^{-\e})$-set, and $B_1, B_2$ satisfy the single-scale non-concentration bound
\[
|B_i\cap [a,a+\delta|B_i|_{\delta}]|_{\delta} \le \delta^{u}|B_i|, \quad a\in [1,2].
\]
Then
\begin{equation} \label{eq:sum-product}
|A+B_1|_{\delta}|A\cdot B_2|_{\delta} \ge \delta^{-\eta} |A|_{\delta} |B_1|_{\delta}^{1/2} |B_2|_{\delta}^{1/2}.
\end{equation}
\end{cor}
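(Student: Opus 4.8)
The plan is to derive this from the dual incidence bound, Corollary \ref{cor:Furstenberg-Bourgain}, via the classical point--line construction of Elekes. Fix the linear map $L(x,y)=(x,y/3)$, which changes all $\delta$-covering numbers by at most a bounded factor and sends lines of slope in $[1,2]$ to lines of slope in $[-1,1)$. For each pair $(b,c)\in B_1\times B_2$ let $\ell_{b,c}=\{y=c(x-b)\}$; then for every $a\in A$ the point $(a+b,ca)$ lies \emph{exactly} on $\ell_{b,c}$, since $ca=c(a+b)-cb$. Applying $L$, the set $P(b,c):=\{(a+b,ca/3):a\in A\}$ is contained in the single dyadic $\delta$-tube $T_{b,c}$ whose slope--intercept square contains $(c/3,-cb/3)$, and since $a\mapsto(a+b,ca/3)$ is bi-Lipschitz with constants comparable to $1$ (as $b,c\in[1,2]$), $P(b,c)$ is a $(\delta,s,O(\delta^{-\e}))$-set with $|P(b,c)|_\delta\gtrsim|A|_\delta=:M$. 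Put $\mathcal{T}=\{T_{b,c}:(b,c)\in B_1\times B_2\}$. The parametrization $(b,c)\mapsto(c/3,-cb/3)$ has Jacobian bounded away from $0$ and $\infty$ on $[1,2]^2$ and is therefore bi-Lipschitz onto its image, so $|\mathcal{T}|\sim|B_1|_\delta|B_2|_\delta$; and $\bigcup_{(b,c)}P(b,c)\subset L\big((A+B_1)\times(A\cdot B_2)\big)$, so $\big|\bigcup_{(b,c)}P(b,c)\big|_\delta\lesssim|A+B_1|_\delta|A\cdot B_2|_\delta$. Thus it remains to apply Corollary \ref{cor:Furstenberg-Bourgain} to $(\mathcal{T},\{P(T_{b,c})\})$ with $M\sim|A|_\delta$.

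The one nonroutine point is verifying the single-scale non-concentration hypothesis for $\mathcal{T}$ at the scale $\delta|\mathcal{T}|^{1/2}\sim\delta(|B_1|_\delta|B_2|_\delta)^{1/2}$. If $\mathbf{T}$ is a $(\delta|\mathcal{T}|^{1/2})$-tube, then $T_{b,c}\subset\mathbf{T}$ forces $(b,c)$ into a box $I\times J$ with $|I|,|J|=O\!\big(\delta(|B_1|_\delta|B_2|_\delta)^{1/2}\big)$, again by the bi-Lipschitz property of the parametrization, whence $|\{T\in\mathcal{T}:T\subset\mathbf{T}\}|\lesssim|B_1\cap I|_\delta\cdot|B_2\cap J|_\delta$. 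Now use $(xy)^{1/2}\le\max(x,y)$: whichever of $|B_1|_\delta,|B_2|_\delta$ is the larger, say $|B_i|_\delta$, the associated interval among $I,J$ has length $O(\delta|B_i|_\delta)$, so the given single-scale bound on $B_i$ controls that factor by $O(\delta^u|B_i|_\delta)$, while the other factor is at most the full covering number of the other set. Therefore $|\{T\subset\mathbf{T}\}|\lesssim\delta^u|B_1|_\delta|B_2|_\delta\lesssim\delta^u|\mathcal{T}|\le\delta^{u/2}|\mathcal{T}|$ once $\delta$ is small, so the hypothesis of Corollary \ref{cor:Furstenberg-Bourgain} holds with $u/2$ in place of $u$. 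I expect this non-concentration transfer --- reconciling the scale $\delta|\mathcal{T}|^{1/2}$ demanded by Corollary \ref{cor:Furstenberg-Bourgain} with the scale $\delta|B_i|_\delta$ at which $B_i$ is assumed non-concentrated --- to be the only real obstacle; everything else is bi-Lipschitz bookkeeping.

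To finish, choose $\e$ less than half the threshold $\e(s,u/2)$ supplied by Corollary \ref{cor:Furstenberg-Bourgain}, so the sets $P(b,c)$ qualify as $(\delta,s,\delta^{-\e(s,u/2)})$-sets. The corollary then gives
\[
\Big|\bigcup_{(b,c)}P(b,c)\Big|_\delta\ \ge\ \delta^{-\eta'}\,M\,|\mathcal{T}|^{1/2}\ \gtrsim\ \delta^{-\eta'}\,|A|_\delta\,\big(|B_1|_\delta|B_2|_\delta\big)^{1/2}
\]
for some $\eta'=\eta'(s,u)>0$. Comparing with the upper bound $\big|\bigcup_{(b,c)}P(b,c)\big|_\delta\lesssim|A+B_1|_\delta|A\cdot B_2|_\delta$ from the first paragraph and absorbing the implicit constants into a small power of $\delta$ yields $|A+B_1|_\delta|A\cdot B_2|_\delta\ge\delta^{-\eta}|A|_\delta|B_1|_\delta^{1/2}|B_2|_\delta^{1/2}$ with, say, $\eta=\eta'/2$, which is \eqref{eq:sum-product}.
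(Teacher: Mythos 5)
Your proof is correct and follows essentially the same route as the paper: the Elekes construction $(A+B_1)\times(A\cdot B_2)$ with the lines $\ell_{b,c}$, fed (after a harmless affine normalization of slopes) into the dual incidence bound, Corollary \ref{cor:Furstenberg-Bourgain}. Your verification of the single-scale non-concentration for $\mathcal{T}$ at scale $\delta|\mathcal{T}|^{1/2}$ via $(|B_1|_\delta|B_2|_\delta)^{1/2}\le\max(|B_1|_\delta,|B_2|_\delta)$ is a correct, and in the asymmetric case $|B_1|_\delta\neq|B_2|_\delta$ slightly more careful, version of the step the paper only asserts (with a reference to the discretized Elekes argument elsewhere).
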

Taking $A=B_1=B_2$, one immediately recovers Bourgain's discretized sum-product theorem, even under the weak non-concentration assumption of Bourgain-Gamburd \cite[Proposition 3.2]{BourgainGamburd08}.

To prove the corollary, consider (as in \cite{Elekes97}) the set $P= (A+B_1)\times (A\cdot B_2)$. For each $(b_1,b_2)\in B_1\times B_2$, the set $P$ intersects the line $\ell_{b_1,b_2}=\{ x+b_1, b_2 x: x\in\R\}$ in an affine copy of $A$. Using that $(b_1,b_2)\to \ell_{b_1,b_2}$ is bilipschitz, it is routine to verify that this configuration satisfies the assumptions of Corollary \ref{cor:Furstenberg-Bourgain}, with $2u$ in place of $u$ (considering for each $(b_1,b_2)$ the $\delta$-dyadic tube that contains $\ell_{b_1,b_2}\cap [0,2]^2$). See e.g. \cite[\S 6.3]{DOV22} for details of adapting Elekes' argument to the discretized setting. The conclusion of Corollary \ref{cor:Furstenberg-Bourgain} is then precisely \eqref{eq:sum-product}.


\end{document}